\definecolor{darkgreen}{rgb}{0,0.5,0}
\definecolor{darkred}{rgb}{0.7,0,0}
\theoremstyle{plain}
\newtheorem{lemma}{Lemma}[section]
\newtheorem{thm}[lemma]{Theorem}
\newtheorem{cor}[lemma]{Corollary}
\theoremstyle{definition}
\newtheorem{rmk}[lemma]{Remark}
\numberwithin{equation}{section}
\newcommand{\al}{\alpha}
\newcommand{\de}{\delta}
\newcommand{\om}{\omega}
\newcommand{\La}{\Lambda}
\newcommand{\si}{\sigma}
\newcommand{\Si}{\Sigma}
\renewcommand{\th}{\theta}
\newcommand{\R}{\ensuremath{{\mathbb R}}}
\newcommand{\N}{\ensuremath{{\mathbb N}}}
\newcommand{\C}{\ensuremath{{\mathbb C}}}
\newcommand{\Cyl}{{\mathscr{C}}}
\DeclareMathOperator{\inj}{inj}
\newcommand{\norm}[1]{\Vert#1\Vert}  
\def\osc{\mathop{{\mathrm{osc}}}\limits}
\newcommand{\arsinh}{{\rm arsinh}}
\newcommand{\beq}{\begin{equation}}
\newcommand{\eeq}{\end{equation}}
\newcommand{\beqs}{\begin{equation*}}
\newcommand{\eeqs}{\end{equation*}}
\newcommand{\beqa}{\begin{equation}\begin{aligned}}
\newcommand{\eeqa}{\end{aligned}\end{equation}}
\newcommand{\beqas}{\begin{equation*}\begin{aligned}}
\newcommand{\eeqas}{\end{aligned}\end{equation*}}
\newcommand{\brmk}{\begin{rmk}}
\newcommand{\ermk}{\end{rmk}}
\newcommand{\partref}[1]{\hbox{(\csname @roman\endcsname{\ref{#1}})}}
\newcommand{\half}{\frac{1}{2}}
\newcommand{\gi}{{g_{\ell_i}}}
\newcommand{\A}{\ensuremath{{\mathcal A}}}
\newcommand{\abs}[1]{\vert#1\vert} 
\newcommand{\babs}[1]{\left\vert#1\right\vert}
\newcommand{\eps}{\varepsilon}
\newcommand{\na}{\nabla}
\newcommand{\Col}{{\mathcal C}}
\newcommand{\bit}{\begin{itemize}}
\newcommand{\eit}{\end{itemize}}
\newcommand{\la}{\lambda}
\newcommand{\dist}{\text{dist}}
\newcommand{\tauE}{\tau_{g_E}}
\newcommand{\hatumi}{{\hat u_{i}}}
\newcommand{\hatui}{{\hat u_i}}
\newcommand{\phisn}{\phi}
\title{{\sc
Convergence of almost harmonic maps to geodesic bubble trees
}}
\author{Melanie Rupflin}
\date{\today}
\begin{document}

\begin{abstract}

We prove a sharp criterion on the decay of the tension of almost harmonic maps from degenerating surfaces that ensures that such maps subconverge to a limiting object that is made up entirely of harmonic maps. 

\end{abstract}

\maketitle

\section{Introduction}\label{sect:intro}
A map $u$ from a closed Riemannian surface $(M,g)$ to a Riemannian manifold $(N,g_N)$ is called harmonic if it is a critical point of the Dirichlet energy $$E(u,g)=\half\int_M\abs{ du}_g^2dv_g.$$ 
Harmonic maps are characterised by $\tau_g(u)=0$, where,  viewing $(N,g_N)$ as isometrically embedded in some $\R^N$ and writing
 $A$ for the second fundamental form, the tension field $\tau_g(u)$
is given by
 $$\tau_g(u)=\Delta_gu+A_g(u)(d u,d u)=\Delta_g u+g^{ij}A(u)(u_{x_i},u_{x_j}).$$
For any \textit{fixed} domain surface $(M,g)$
the compactness results from  \cite{Struwe1985, DT, QingTian, LinWang} ensure that for maps $u_i:(M,g)\to N$ with bounded energy
which are almost harmonic in the sense that 
\beq \label{def:almost-harm-fixed-domain}
\norm{\tau_g(u_i)}_{L^2(M,g)}\to 0 \text{ as } i\to \infty\eeq
a subsequence converges to  a bubble tree that  consists of a harmonic base map $u_\infty:M\to N$ and a finite number of harmonic spheres $\om_i:S^2\to N$ that bubble off and that in this convergence there is no loss of energy nor formation of non-trivial necks. Hence for almost harmonic maps from any fixed domain surface the  limit against which the sequence converges is made up entirely of objects that are themselves critical points of the Dirichlet energy, i.e. harmonic maps.

It is now natural to ask whether the same can be expected if we instead consider a sequence of maps $u_i:M\to N$ which are almost harmonic with respect to a sequence of metrics $g_i$ on $M$. Unsurprisingly the answer is positive if the metrics themselves converge to a limiting metric 
$g_\infty$, after potentially pulling-back by suitable diffeomorphisms. In particular, if we use the uniformisation theorem and the conformal invariance of the Dirichlet energy to restrict our attention to domain metrics of constant curvature $\kappa_{g_i}=1,0,-1$ for surfaces of genus $0,1, \geq 2$ (with unit area if $\gamma=1$) then this holds true  unless the injectivity radius of $(M,g_i)$ tends to zero.

If $\inj(M,g_i)\to 0$ then the situation is more involved as parts of the surface degenerate. In this setting the compactness properties of \textit{harmonic maps} were investigated in \cite{Chen-Tian-main, Li-Wang-main, Miaomiao}, while the compactness properties of almost harmonic maps were considered in \cite{RTZ,HRT}. We note that such almost harmonic maps from degenerating surfaces in particular arise in the asymptotic analysis of Teichm\"uller harmonic map flow, see \cite{RT,RTZ,HRT}.

To describe these results we first recall some well known properties about degenerating hyperbolic surfaces, see \cite{Tromba} for more details. 

So let  $(M,g_i)$ be a sequence of surfaces with Gauss-curvature $\kappa_{g_i}\equiv -1$ for which $\inj(M,g_i)\to 0$. Then we can pass to a subsequence so that for some $k\in\{1,\ldots,3(\gamma-1)\}$ there are simple closed geodesics $\{\si_i^j\}_{j=1}^k$ of length $\ell_i^j\to 0$, a complete hyperbolic surface $(\Si,h)$ with $2k$ punctures and diffeomorphisms $f_i:\Si\to M\setminus \bigcup_{j=1}^k \si_i^j$ so that 
 $$f_i^*g_i\to h \text{ smoothly locally on } \Si.$$
Furthermore we know that the degenerating parts of the surface $(M,g_i)$ are contained in the union of so called collar neighbourhoods $\Col(\si_i^j)$ around the collapsing geodesics which are isometric to hyperbolic cylinders $(\Col(\ell_i^j),g_{\ell_i^j})$ characterised by
\beq 
\label{def:collar}
(\Col(\ell), g_\ell)=\big([-X(\ell),X(\ell)]\times S^1, \rho_{\ell}
 ^2(s)(ds^2+d\th^2)\big)
 \eeq
and
\beq
\label{def:X-rho}
X(\ell)=\tfrac{2\pi}{\ell}\left(\tfrac\pi2-\arctan\left(\sinh\left(\tfrac{\ell}{2}\right)\right) \right)
\text{ and } \rho_\ell(s)=\tfrac{\ell}{2\pi\cos(\tfrac{\ell}{2\pi} s)}.\eeq
If we now consider a  sequence of maps $u_i:M\to N$ from such degenerating hyperbolic surfaces which satisfy  $\norm{\tau_{g_i}(u_i)}_{L^2(M,g_i)}\to 0$, then 
on compact subsets of $\Si$  the almost harmonic maps $u_i\circ f_i:(\Si,f_i^*g_i)\to (N,g_N)$ exhibit the same type of convergence behaviour as obtained in \cite{Struwe1985, DT, QingTian, LinWang}  in the non-degenerate case, that is
 subconvergence to a bubble tree of harmonic maps without loss of energy or formation of necks. Conversely, these results do not describe the behaviour of the maps $u_i$ on the parts of the surface where $ 
\inj\to 0$.

As these degenerating regions are contained in the collar neighbourhoods described above we can equivalently consider the behaviour of    
 almost harmonic maps $u_i$ on hyperbolic cylinders $(\Col(\ell_i),g_{\ell_i})$ with  $\ell_i\to 0$.  
 In \cite{HRT} it is shown that such almost harmonic maps subconverge to a full bubble branch in the following sense: 
 
\begin{thm}[Contents of Theorem 1.9 of \cite{HRT}] \label{thm:HRT}
Let $(\Col(\ell_i),g_{\ell_i})$, $\ell_i\to 0$, be a sequence of hyperbolic cylinders as in \eqref{def:collar} and let 
 $u_i:(\Col(\ell_i),g_{\ell_i})\to N$ be a sequence of smooth maps which have bounded energy and for which 
$$\norm{\tau_{g_i}(u_i)}_{L^2(\Col(\ell_i),g_{\ell_i})}\to 0.$$  
Then, after passing to a subsequence, there exists a number $\bar m\in\N_0$ and sequences 
$$-X(\ell_i)=:s_i^0\ll s_i^1\ll\ldots\ll s_i^{\bar m}:=X(\ell_i)$$ 
so that for each $m\in\{1,\ldots, \bar m-1\}$ the shifted maps $u_i(\cdot+s_i^m,\cdot)$ converge to a 
non-trivial bubble branch
locally on $\R\times S^1$ and so that 
$u_i$ maps $[s_i^{m-1}+\La,s_i^m-\La]\times S^1$, $\La$ large, close to a curve in the sense that 
\beq \label{est:oscillation-u-circles-to-zero}
\lim_{\La\to \infty}\limsup_{i\to \infty} \sup_{s \in[s_i^{m-1}+\La, s_i^m-\La]}\osc_{\{s\}\times S^1} u_i =0 \text{ for every } m\in\{1,\ldots,\bar m\}.
\eeq
\end{thm}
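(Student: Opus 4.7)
Since the Dirichlet energy and the $L^2$ norm of the tension are conformally invariant in dimension two, I equivalently view $u_i$ as a sequence of maps from the flat cylinders $\bigl([-X(\ell_i),X(\ell_i)]\times S^1,g_E\bigr)$, with $g_E=ds^2+d\th^2$, having bounded energy and $\|\tauE(u_i)\|_{L^2(g_E)}\to 0$. The whole argument proceeds in this Euclidean setting.

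\textbf{Selection of bubble scales and convergence.} Fix an $\eps$-regularity threshold $\eps_0>0$ small enough that any map on $[-2,2]\times S^1$ with energy below $\eps_0$ and $L^2$-small tension satisfies $|\nabla u|\leq C\bigl(E^{1/2}+\|\tau\|_{L^2}\bigr)$ pointwise on $[-1,1]\times S^1$. I define the shifts iteratively: $s_i^0:=-X(\ell_i)$, and given $s_i^{m-1}$ let $s_i^m$ be the smallest $s>s_i^{m-1}+2$ in $[-X(\ell_i),X(\ell_i)]$ with $E\bigl(u_i;[s-1,s+1]\times S^1\bigr)\geq\eps_0$, terminating with $s_i^{\bar m}:=X(\ell_i)$ when no such $s$ remains. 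Disjointness of the selected sub-cylinders together with the uniform energy bound caps $\bar m$. After passing to a subsequence, for each $1\leq m<\bar m$ either $s_i^m-s_i^{m-1}$ stays bounded --- in which case adjacent indices are merged --- or diverges, and similarly for $s_i^{m+1}-s_i^m$; in the latter case the shifted maps $u_i(\cdot+s_i^m,\cdot)$ are defined on domains exhausting $\R\times S^1$, carry bounded energy with at least $\eps_0$ concentrated near the origin, and $L^2$-small tension, so the bubble-tree extraction of Sacks--Uhlenbeck/Struwe/Ding--Tian applied on $\R\times S^1$ yields subsequentially a non-trivial bubble branch as the limit.

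\textbf{No-neck oscillation estimate.} On each complement interval $[s_i^{m-1}+\La,s_i^m-\La]$ every unit sub-cylinder carries energy below $\eps_0$, so $\eps$-regularity already delivers a uniform pointwise gradient bound. To upgrade this to \eqref{est:oscillation-u-circles-to-zero} I analyze the angular energy
\[
F_i(s):=\int_{\{s\}\times S^1}|\partial_\th u_i|^2\,d\th,
\]
which dominates the oscillation by Poincar\'e on $S^1$, $\osc_{\{s\}\times S^1}u_i\leq C\,F_i(s)^{1/2}$. Differentiating $F_i$ twice in $s$ and substituting $\partial_s^2 u_i=\tauE(u_i)-\partial_\th^2 u_i-A(u_i)(du_i,du_i)$ from the harmonic map equation, then integrating by parts in $\th$ and exploiting the pointwise gradient smallness, produces a Jacobi-type differential inequality $F_i''(s)\geq c\,F_i(s)-e_i(s)$ with $\int e_i\leq C\|\tauE(u_i)\|_{L^2}^2\to 0$. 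An ODE comparison argument then forces exponential decay of $F_i$ into the interior of the neck up to an error controlled by $\|\tauE(u_i)\|_{L^2}$, giving \eqref{est:oscillation-u-circles-to-zero} after first sending $i\to\infty$ and then $\La\to\infty$.

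\textbf{Main obstacle.} The decisive difficulty is this last step: sharp cross-sectional decay of $F_i$ must be extracted along a neck whose length may grow with $i$, from only $L^2$ control of the tension. In the genuinely harmonic case $F_i''\geq cF_i$ holds exactly and classical ODE arguments apply, whereas for almost harmonic maps the forcing term is only small on average; it is precisely the sharp hypothesis $\|\tauE(u_i)\|_{L^2(g_E)}\to 0$, combined with Cauchy--Schwarz over the neck length, that renders the cumulative contribution of $\tauE(u_i)$ to the oscillation negligible.
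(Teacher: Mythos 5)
Your outline reproduces, in essence, the argument of Huxol--Rupflin--Topping for their Theorem 1.9, which the present paper only quotes rather than proves: a greedy selection of energy-concentration scales along the cylinder, followed by the angular-energy differential inequality $\vartheta''\geq \vartheta-e$ with $\int e\leq C\|\tau_{g_E}(u_i)\|_{L^2(g_E)}^2$, whose integrated (exponential-decay) form is exactly Lemma \ref{lemma:ang-energy-standard} here and which, combined with $\osc_{\{s\}\times S^1}u_i\leq C\vartheta_i(s)^{1/2}$, gives \eqref{est:oscillation-u-circles-to-zero}. Two small imprecisions worth correcting: the $L^2$ norm of the tension is \emph{not} conformally invariant --- one only has the one-sided bound $\|\tau_{g_E}(u)\|_{L^2(g_E)}\leq \sup\rho_\ell\cdot\|\tau_{g_\ell}(u)\|_{L^2(g_\ell)}$ as in \eqref{est:relation-tension}, which suffices here because $\rho_\ell$ is uniformly bounded above; and $L^2$ control of the tension on a low-energy unit subcylinder yields interior $H^2$ and hence $L^4$ gradient bounds (Lemma \ref{lemma:H2-standard}), not the pointwise bound $|\nabla u|\leq C(E^{1/2}+\|\tau\|_{L^2})$ you assert, but the $L^4$ bounds are all that the derivation of the differential inequality actually requires.
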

Here we recall from \cite{HRT} that we say that maps from cylinders $[-Y_i,Z_i]\times S^1$ with $Y_i,Z_i\to \infty$ converge to a bubble branch if they converge to a harmonic limit $\om_\infty$ weakly in $H^1_{loc}(\R\times S^1)$ and strongly in $H^2_{loc}(\R\times S^1 \setminus S)$ away from a finite (possibly empty) set of points $S\subset \R\times S^1$ where a finite number of bubbles $\om_i$ form. 
We call such a bubble branch non-trivial if the limiting configuration consists of at least one non-trivial harmonic map, or equivalently we call a bubble branch trivial only if no bubbles form and if the map $\om_\infty$ is constant. 
\begin{rmk} \label{rmk:compact}
We also recall that in this convergence of the shifted maps to a bubble branch there can be no loss of energy nor formation of necks on \textit{compact} subsets of $\R\times S^1$. Namely for all suitably large (but fixed) numbers $\La>0$ we know that 
\beq
\label{eq:energy-bb}
E(u_i, [s_i^m-\La,s_i^m+\La]\times S^1)\to E(\om_\infty, [-\La,\La]\times S^1)+\sum E(\om_i)\eeq
as well as that the maps $u_i(\cdot -s_i^m)$ are close in $L^\infty([-\La,\La]\times S^1)$ to maps that are built out of $\om_\infty$ and suitable rescalings of the bubbles, see \cite{HRT} for details. 
\end{rmk}
Conversely,  energy can be lost on the longer and longer cylinders $[s_i^{m-1}+\La,s_i^m-\La]\times S^1$ in the domains that connect the different bubble regions and a description of this loss of energy terms of the Hopf-differential was provided in \cite{Miaomiao} (for harmonic maps) and in \cite{HRT} (for almost harmonic maps).

We note that
while  \eqref{est:oscillation-u-circles-to-zero} ensures that 
the degenerating regions are always mapped close to the curves 
\beq \label{def:hat-u_i} 
\hatumi(s):= \pi_N(\fint_{\{s\}\times S^1} u_i d\th),\quad s\in [s_i^{m-1}+\La,s_i^m-\La], \quad m=1,\ldots,\bar m,
\eeq
$\pi_N$ the nearest point projection from a neighbourhood of $N$ to $N$,  
in the sense that 
\beq
\label{eq:ui-approx-hat-ui} 
 \lim_{\La\to \infty}\limsup_{i\to \infty} \norm{u_i-\hatumi}_{L^\infty([s_i^{m-1}+\La, s_i^{m}-\La]\times S^1)} =0, \quad m=1,\ldots,\bar m,\eeq 
 we cannot expect these curves to collapse to a point in the limit $i\to \infty$ and $\La\to \infty$. 
In particular we cannot expect 
that the asymptotic values
\beq
\label{def:asympt-values}
p_\infty^{m-1,+}:=\lim_{\La\to \infty}\lim_{i\to \infty} u_i(s_i^{m-1}+\La, \cdot) \text{ and } p_\infty^{m,-}:=\lim_{\La\to \infty}\lim_{i\to \infty} u_i(s_i^{m}-\La,\cdot)
\eeq
of neighbouring bubble branches agree. 
 
Instead it is natural to ask whether we can expect the connecting curves  $\hat u_i$  to 'look like' (possibly longer and longer) geodesics. If so this would mean that in the limit $i\to \infty$ we again only obtain objects that are harmonic maps themselves, namely a harmonic map from the limiting surface $\Si$, a finite number of harmonic spheres $\om_i:S^2\to N$ and a number of (possibly infinite length) geodesics, i.e. harmonic maps from suitable intervals. In a situation like that we would then say that the sequence of maps $u_i:M\to N$ converges to a \textit{geodesic bubble tree}.

For maps  $u_i$ which are harmonic, rather than just almost harmonic, the results of Chen-Tian \cite{Chen-Tian-main} and Li-Wang \cite{Li-Wang-main} establish such a convergence to a geodesic bubble tree. Indeed they prove that after passing to a subsequence the connecting curves $\hat u_i^m$ either collapse to a point,  converge (after reparametrising) to a finite length geodesic or that in the limit their image contains an infinite length geodesic. The results of \cite{Chen-Tian-main} furthermore exclude the last possibility in the case of energy minimising maps.
 
Conversely, as explained in \cite{HRT},
we cannot expect to obtain convergence to a \textit{geodesic} bubble tree if we consider the more general case of maps from degenerating hyperbolic surfaces that are almost harmonic in the sense that $\norm{\tau_{g_i}(u_i)}_{L^2(M,g_i)}\to 0$. Indeed, as  pointed out in Proposition 1.14 of \cite{HRT}  we can obtain any $C^2$ curve in $(N,g_N)$ as limit 
of connecting curves for maps with tension $\norm{\tau_{g_i}(u_i)}_{L^2(M,g_i)}\leq C\inj(M,g_i)^{\half}\to 0$ simply by reparametrising such a curve along the collar.

The purpose of this paper is to close the gap left between the results of \cite{Chen-Tian-main, Li-Wang-main} that ensure convergence to a \textit{geodesic} bubble tree for \textit{harmonic} maps and the examples of \cite{HRT} that show that no such result can be true for almost harmonic maps whose tension decays no faster than $\inj(M,g_i)^\half$. 

Indeed, our main result shows that any rate of decay of the tension that is faster than $\inj(M,g_i)^\half$ will force the connecting curves to look like geodesics and hence ensure subconvergence to a geodesic bubble tree.
To prove this it suffices to prove the corresponding result for almost harmonic maps from hyperbolic cylinders with $\ell_i\to 0$.
 
Given such a sequence of maps $u_i: \Col(\ell_i)\to N$ 
with  $\norm{\tau_{g_{\ell_i}}(u_i)}_{L^2(\Col(\ell_i), g_{\ell_i})}=o(\ell_i^\half)$
we can first apply Theorem \ref{thm:HRT} to locate the points $s_i^m$ at which the different bubble branches form.
Instead of splitting the cylinder $[-X(\ell_i),X(\ell_i)]\times S^1$ into
 fixed size bubble regions $[s_i^m-\La,s_i^m+\La]\times S^1$ and the longer and longer cylinders between these sets, we  instead want to consider \textit{extended bubble regions} 
$
B_i^m=[a_i^m,b_i^m]\times S^1$ around $\{s_i^m\}\times S^1$ with $b_i^m-a_i^{m}\to \infty$
and the \textit{connecting cylinders} $[b_i^{m-1},a_i^m]\times S^1$ between these regions, where  $a_i^m$ and $b_i^m$ will be carefully chosen numbers
with
\beq
\label{eq:ai-bi}
s_i^{m-1}\ll b_i^{m-1}\leq a_i^m\ll s_i^m \text{ for } m=1,\ldots,\bar m \text{ while } a_i^0:=-X(\ell_i)\text{ and } b_i^{\bar m}:= X(\ell_i).
\eeq

On the one hand, to ensure that 
there can be no loss of energy or formation of necks on the extended bubble regions we need to know that
\beq \label{claim:no-energy-loss-infty}
\lim_{\La\to \infty} \limsup_{i\to \infty}
E(u_i, [s_i^{m-1}+\La,b_i^{m-1}]\times S^1)+
E(u_i, ([a_i^{m},s_i^m-\La]\times S^1)=0\eeq
and 
\beq 
 \label{claim:no-neck-infty}
 \lim_{\La\to \infty} \limsup_{i\to \infty}  
 \osc_{[s_i^{m-1}+\La,b_i^{m-1}]\times S^1} u_i+
 \osc_{[a_i^{m},s_i^m-\La]\times S^1} u_i=0, \eeq
hold true, compare Remark \ref{rmk:compact}.

On the other hand, we want $a_i^m$ and $b_i^m$ to be so that, 
after reparametrisation by arclength, the restriction of the maps $u_i$ onto the connecting cylinders $[b_i^{m-1},a_i^m]\times S^1$ is essentially described by a (trivial, finite length or infinite length) geodesic.

As we shall see, it is possible to satisfy both of these properties simultaneously if and only if we know that the tension decays strictly faster than $\ell_i^\half$ and indeed we can prove 
\begin{thm}\label{thm:1}
Let 
$u_i:(\Col(\ell_i),g_{\ell_i})\to (N,g_N)$ be a sequence of maps from hyperbolic cylinders with $\ell_i\to 0$ with bounded energy and
 \beq
\label{ass:tension-thm} 
\ell_i^{-\half}
\norm{\tau_{\gi}(u_i)}_{L^2(\Col(\ell_i),g_{\ell_i})}\to 0\eeq
which converges to a full bubble branch as described in Theorem \ref{thm:HRT} above. 

Then we can choose $a_i^m,b_i^m$ 
as in \eqref{eq:ai-bi} so that
 \eqref{claim:no-energy-loss-infty} and \eqref{claim:no-neck-infty} hold, and hence so that there can be no loss of energy or formation of necks on the extended bubble regions $[a_i^m,b_i^m]\times S^1$,
 and so that the images $u_i([b_i^{m-1},a_i^m]\times S^1)$
 of the connecting cylinders subconverge to geodesics in the following sense. 
 
 Let $\hat u_i$ be the connecting curves defined by \eqref{def:hat-u_i} which approximate $u_i$ as described in \eqref{eq:ui-approx-hat-ui} and let 
 $v_i^m:[-c_i^m,c_i^m]\to N$ be the reparametrisation of $\hatumi\vert_{[b_i^{m-1},a_i^{m}]}$ by arclength. Then 
 $$\norm{\tau(v_i^m)}_{L^p([-c_i^m,c_i^m])}\to 0 \text{ for every }p\in [1,2]$$
and hence, after passing to a subsequence, we have 
\begin{enumerate}
\item (Trivial neck case) If $c_i^m\to 0$ then 
 the connecting curves $\hat u_i^m$ collapse to a point so no neck forms between the two bubble branches against which $u_i(\cdot-s_i^{m-1},\cdot)$ and $u_i(\cdot-s_i^m,\cdot)$ converge.
\item (Finite length case) If $c_i^m\to c_m\in (0,\infty)$ then 
the curves $v_i^m(\frac{c_m^i}{c_m}\cdot )$ converge strongly in $W^{2,2}([-c_m,c_m],N)$ to a geodesic which connects 
the points $p_\infty^{m-1,+}$ and  $p_\infty^{m,-}$
in the
 images of the bubble branches against which $u_i(\cdot-s_i^{m-1})$ and $u_i(\cdot-s_i^{m})$ converge. 
\item(Infinite length case)  If $c_i^m\to\infty$ then 
the curves $v_i^m(\cdot +c_i^m)$ and $v_i^m(c_i^m-\cdot)$ converge in $W^{2,2}_{loc}([0,\infty))$ to infinite length geodesics which originate at the points  $p_\infty^{m-1,+}$ and  $p_\infty^{m,-}$. 
\end{enumerate}
\end{thm}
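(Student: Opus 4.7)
The plan is to (i) choose $a_i^m, b_i^m$ via a diagonal argument so that the extended bubble regions capture the full bubble-branch energy, (ii) establish the $L^p$ tension bound by $\theta$-averaging the Euler--Lagrange equation for $u_i$, and (iii) conclude each case by ODE compactness. For (i), Theorem~\ref{thm:HRT} together with Remark~\ref{rmk:compact} ensures that for each fixed $\La$ the energy and oscillation of $u_i$ on $[s_i^m-\La,s_i^m+\La]\times S^1$ converge to the full bubble-branch values as $i\to\infty$ and then $\La\to\infty$. A diagonal argument produces a sequence $\La_i\to\infty$ with $\La_i = o(s_i^m-s_i^{m-1})$ along which this convergence still holds; setting $b_i^{m-1}:=s_i^{m-1}+\La_i$ and $a_i^m:=s_i^m-\La_i$ gives \eqref{claim:no-energy-loss-infty} and \eqref{claim:no-neck-infty} because for any fixed $\La$ the interval $[s_i^{m-1}+\La,b_i^{m-1}]$ is a tail of the right half of the $(m-1)$st bubble branch, on which energy and oscillation vanish as $\La\to\infty$ by finiteness of the total branch energy.

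The core technical step is the $L^p$ tension bound. Writing $T_i:=\tau_{\gi}(u_i)$ and $\bar u_i(s):=\fint u_i(s,\theta)\,d\theta$, integrating the conformal equation $u_{i,ss}+u_{i,\theta\theta}+A(u_i)(u_{i,s},u_{i,s})+A(u_i)(u_{i,\theta},u_{i,\theta})=\rho_{\ell_i}^2 T_i$ over $S^1$ yields an ODE for $\bar u_i$. The $\theta$-oscillation smallness \eqref{est:oscillation-u-circles-to-zero} on $[b_i^{m-1},a_i^m]\times S^1$ lets me replace $u_i$ by $\hat u_i=\pi_N(\bar u_i)$ and write $\tau(\hat u_i)(s)=\rho_{\ell_i}^2\fint T_i\,d\theta+R_i(s)$, where $R_i$ is pointwise dominated by $\|u_{i,\theta}\|_{L^2(\{s\}\times S^1)}^2$ together with analogous $\theta$-oscillation quantities whose $L^1(ds)$ norm vanishes on the connecting cylinder. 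By Jensen and Cauchy--Schwarz, $\int\rho_{\ell_i}^2|\fint T_i\,d\theta|\,ds\leq\bigl(\int\rho_{\ell_i}^2\,ds\bigr)^{1/2}(2\pi)^{-1/2}\|T_i\|_{L^2(\gi)}=O(1)\cdot o(\ell_i^{1/2})$, since the area of the collar is uniformly bounded; this is where the sharp decay threshold \eqref{ass:tension-thm} is consumed. Converting to arclength via $|\tau(v_i^m)(t(s))|\leq|\tau(\hat u_i)(s)|/|\hat u_i'(s)|^2$ and $dt=|\hat u_i'|\,ds$, with careful tracking of the component of $\tau(\hat u_i)$ perpendicular to $\hat u_i'$, yields $\|\tau(v_i^m)\|_{L^p([-c_i^m,c_i^m])}\to 0$ for every $p\in[1,2]$.

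With this estimate in hand and $|(v_i^m)'|\equiv 1$, standard ODE compactness finishes each case. If $c_i^m\to 0$ the curves collapse, since their image has diameter at most $2c_i^m\to 0$. If $c_i^m\to c\in(0,\infty)$, rescaling to the fixed domain $[-c,c]$ produces uniform $W^{2,2}$ bounds on the rescaled arclength curves, so the subsequential strong limit is a geodesic whose endpoints match $p_\infty^{m-1,+}$ and $p_\infty^{m,-}$ via \eqref{eq:ui-approx-hat-ui} and \eqref{def:asympt-values}. If $c_i^m\to\infty$, the same compactness applied on each bounded window of $[0,\infty)$ after shifting by $\pm c_i^m$ yields, through a diagonal subsequence, local $W^{2,2}$ convergence to two infinite-length geodesics originating at the required asymptotic points.

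The main obstacle is the conversion to the arclength $L^p$ norm in the second paragraph, since the pointwise bound on $|\tau(v_i^m)|$ carries the factor $1/|\hat u_i'|^2$, which is large precisely in the finite-length and trivial-neck regimes where a curve of bounded length is spread over an $s$-interval of diverging length. Handling this degeneracy should require isolating the component of $\tau(\hat u_i)$ perpendicular to $\hat u_i'$, since only that component feeds into $\tau(v_i^m)$ while the tangential part merely records the harmless reparametrisation speed, and exploiting the $L^2$-tension assumption in a form more refined than the naive pointwise estimate.
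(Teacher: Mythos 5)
Your outline of steps (ii)--(iii) is in the right spirit -- the paper also derives the tension of $\hat u_i=\pi_N(\bar u_i)$ by averaging the equation over $S^1$, controls the error by angular-energy quantities, converts to arclength, and finishes by ODE compactness -- but there is a genuine gap at exactly the point you flag as ``the main obstacle'', and it is not a technicality that a refinement of the pointwise estimate can repair. The factor $|\hat u_i'|^{-2}$ in $|\tau(v_i^m)|\leq 2|\hat u_i'|^{-2}|\tau(\hat u_i)|$ multiplies the \emph{whole} tension (isolating the component of $\tau(\hat u_i)$ perpendicular to $\hat u_i'$ does not help, since that component is also divided by $|\hat u_i'|^2$ after reparametrisation), so the only way to close the argument is a quantitative \emph{lower bound} on $|\hat u_i'|$ on the connecting cylinder. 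This is where the definition of $a_i^m,b_i^m$ does all the work in the paper, and your choice $b_i^{m-1}=s_i^{m-1}+\La_i$, $a_i^m=s_i^m-\La_i$ by a diagonal argument cannot provide it: with that choice the connecting cylinder may contain regions where $\hat u_i'$ vanishes, the arclength reparametrisation is then not even well defined, and no bound on $\|\tau(v_i^m)\|_{L^p}$ is possible (indeed the examples in \cite{HRT} with tension of order $\ell_i^{1/2}$ show the conclusion is simply false without exploiting the strict $o(\ell_i^{1/2})$ decay through such a lower bound).

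The paper instead sets $\eps_i=\ell_i^{-1/2}\|\tau_{g_i}(u_i)\|_{L^2}$, $\de_i=\max(\eps_i,\ell_i)^{1/2}$, and defines $b_i^{m-1}$ and $a_i^m$ as the minimal and maximal points of the interval $I_i^m=\{s:\dist(s,\A(u_i))\geq 4|\log\ell_i|+1\}$ at which $\rho_i(s)^{-1}\al_i(s)\geq\de_i$, where $\al_i(s)=\fint_{\{s\}\times S^1}|\partial_s u_i|$. Two lemmas are then needed which your proposal does not supply: (a) an oscillation bound on $\al_i$ (Lemma \ref{lemma:alpha-velocity} combined with \eqref{est:F-to-half}) showing that $\al_i$ varies by at most $C\ell_i^2+C\rho_i^{1/2}\|\tau_{g_i}(u_i)\|_{L^2}$ across $I_i^m$, which forces $\al_i\geq(1-o(1))\de_i\rho_i$ on \emph{all} of $[b_i^{m-1},a_i^m]$ -- i.e.\ the ``relevant'' region is automatically an interval, so the limit curve cannot develop corners -- and, via Corollary \ref{cor:speed}, gives $|\hat u_i'|\geq\tfrac12\de_i\rho_i$ there; and (b) Lemma \ref{lemma:osc-al-log}, which shows that on the discarded parts of $I_i^m$ (where $\rho_i^{-1}\al_i\leq\de_i$) one has $\int\al_i\to 0$, so that no energy or oscillation is lost on the extended bubble regions. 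Your diagonal argument addresses neither: it does not control the oscillation of $u_i$ on the growing region between a fixed $\La$ and $\La_i$ (Remark \ref{rmk:compact} only governs compact windows, and necks/energy loss occur precisely on the unbounded intermediate cylinders), and it severs the link between the endpoint choice and the velocity lower bound. Finally, note that the threshold $\de_i=\max(\eps_i,\ell_i)^{1/2}$ is calibrated so that, e.g., $\|\tau(v_i)\|_{L^1}\leq C\de_i^{-1}\eps_i\leq C\de_i\to 0$; your $L^1$ computation $\int\rho_i^2|\fint T_i|\,ds=o(\ell_i^{1/2})$ is an estimate in the $s$-variable and does not survive the change to arclength without this calibration.
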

Here 
 $p_\infty^{m-1,+}$ and  $p_\infty^{m,-}$ are given by \eqref{def:asympt-values}.
 
The analogue result also holds true for maps from degenerating tori, albeit with a different sharp rate on the decay of the tension. Namely we show 
\begin{thm}
\label{thm:torus}
Let $(T^2,g_i)$ be a sequence of flat unit area tori whose injectivity radius converges to zero and let $u_i:T^2\to N$ be a sequence of maps whose tension satisfies 
$$\norm{\tau_{g_i}(u_i)}_{L^2(T^2,g_i)}\inj( T^2,g_i)^{-2}\to 0.$$
Then, after passing to a subsequence, the maps $u_i$ converge to a \textit{geodesic} bubble tree as described in Theorem \ref{thm:1}.
\end{thm}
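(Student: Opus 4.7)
The plan is to mirror the proof of Theorem \ref{thm:1}, replacing the hyperbolic collar by a flat Euclidean cylinder. First I would use the uniformization theorem and conformal invariance of the Dirichlet energy to represent the degenerating flat torus $(T^2,g_i)$ as a flat cylinder $\Col_i = [-L_i,L_i]\times S^1$ (with $S^1=\R/2\pi\Z$) equipped with the metric $g_i=\lambda_i^2(ds^2+d\theta^2)$ and an isometric identification of the two ends, possibly with a twist. The unit-area condition gives $\lambda_i^2\cdot 4\pi L_i=1$, so $\lambda_i\sim L_i^{-1/2}\sim\inj(T^2,g_i)$, and the conformal rescaling $\|\tau_{g_E}(u)\|_{L^2(g_E)}=\lambda_i\|\tau_{g_i}(u)\|_{L^2(g_i)}$ turns the tension assumption into $L_i^{3/2}\|\tau_{g_E}(u_i)\|_{L^2(\Col_i,g_E)}\to 0$, where $g_E=ds^2+d\theta^2$.

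With this reduction in hand I would run the bubble branch decomposition on these flat cylinders; the $\ep$-regularity used to prove Theorem \ref{thm:HRT} applies unchanged (the flat geometry is if anything simpler than the hyperbolic one), producing points $-L_i=s_i^0\ll\cdots\ll s_i^{\bar m}=L_i$ around which non-trivial bubble branches form, together with the oscillation control \eqref{est:oscillation-u-circles-to-zero} on the intermediate regions. The extended bubble regions $[a_i^m,b_i^m]\times S^1$ of \eqref{eq:ai-bi} would then be selected by starting from each $s_i^m$ and moving outward as long as the accumulated energy and circle-wise oscillation stay below a sequence $\delta_i\to 0$, securing both \eqref{claim:no-energy-loss-infty} and \eqref{claim:no-neck-infty}.

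The main step is the geodesic analysis on each connecting cylinder $[b_i^{m-1},a_i^m]\times S^1$. There the oscillation of $u_i$ on every circle vanishes, so the $S^1$-average $\bar u_i(s)=\fint_{S^1}u_i(s,\theta)\,d\theta$ is $L^\infty$-close to $u_i$, the projected curve $\hatumi(s)=\pi_N(\bar u_i(s))$ is $C^2$, and $\|\partial_\theta u_i(s,\cdot)\|_{L^2_\theta}\to 0$ by Poincar\'e. Averaging the identity $\tau_{g_E}(u_i)=\partial_s^2 u_i+\partial_\theta^2 u_i+A(u_i)(du_i,du_i)$ over $\theta$ kills the $\partial_\theta^2$ term by periodicity, and replacing $\fint A(u_i)(du_i,du_i)\,d\theta$ by $A(\bar u_i)(\partial_s\bar u_i,\partial_s\bar u_i)$ up to an error controlled by $\|u_i-\bar u_i\|_{L^\infty_\theta}$ and $\|\partial_\theta u_i\|_{L^2_\theta}^2$ yields an ODE for $\bar u_i$ whose driving term is $\fint_{S^1}\tau_{g_E}(u_i)\,d\theta$. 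Jensen and H\"older together give
$$
\Big\|\fint_{S^1}\tau_{g_E}(u_i)\,d\theta\Big\|_{L^p_s([b_i^{m-1},a_i^m])}\leq C L_i^{1/p-1/2}\,\|\tau_{g_E}(u_i)\|_{L^2(\Col_i)},
$$
and the hypothesis $L_i^{3/2}\|\tau_{g_E}(u_i)\|_{L^2}\to 0$ is precisely what makes the right-hand side vanish for every $p\in[1,2]$. Projecting to $N$ and reparametrising by arclength then transfers this to $\|\tau(v_i^m)\|_{L^p([-c_i^m,c_i^m])}\to 0$, from which standard $W^{2,2}_{\loc}$ compactness for the geodesic equation $v''+A(v)(v',v')=\tau(v)$ produces the three alternatives.

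The hard part is this third step, and the heart of the matter is the exponent: the power of $L_i$ accumulated when passing from $L^2(\Col_i)$ to $L^p_s$ is beaten by the hypothesis exactly at the threshold $\inj^{-2}$ (equivalently the factor $L_i^{3/2}$), matching the shape of the sharpness of Theorem \ref{thm:1}. A further care point is to control the error in the averaged ODE uniformly along the possibly very long connecting cylinder, which forces $\delta_i\to 0$ to be chosen slowly enough that the $L^\infty$-oscillation and the $\theta$-energy of $u_i$ beat the accumulated length; this is analogous to the selection of $a_i^m,b_i^m$ in Theorem \ref{thm:1} and should go through by the same scheme.
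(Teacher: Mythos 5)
Your reduction to a flat cylinder, the rescaling of the tension hypothesis to $L_i^{3/2}\norm{\tauE(u_i)}_{L^2}\to 0$, and the bubble-branch decomposition all match the paper. But the core of your third step has a genuine gap: you bound $\norm{\fint_{S^1}\tauE(u_i)\,d\theta}_{L^p_s}$ by $CL_i^{1/p-1/2}\norm{\tauE(u_i)}_{L^2}$ and assert that the hypothesis is ``precisely'' what makes this vanish. It is not: for $p\in[1,2]$ the factor $L_i^{1/p-1/2}$ grows at most like $L_i^{1/2}$, so your displayed bound already tends to zero whenever $\norm{\tau_{g_i}(u_i)}_{L^2(g_i)}\to 0$, with no rate at all. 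Since the reparametrised closed curves $u_i(s,\theta)=\al(2\pi s/B_i)$ have tension of order $\inj^2$ and do not converge to geodesics, the sharp threshold cannot be seen in that estimate. The threshold enters elsewhere, at the step you pass over in one sentence: reparametrising $\hatumi$ by arclength. Writing $v_i=\hatumi\circ s_i$ with $\abs{\dot s_i}=\abs{\hatumi'\circ s_i}^{-1}$ one gets $\abs{\tauE(v_i)}\approx\abs{\hatumi'}^{-2}\abs{\tauE(\hatumi)}$, and after the change of variables $\norm{\tauE(v_i)}_{L^p}^p\lesssim\int\abs{\hatumi'}^{1-2p}\abs{\tauE(\hatumi)}^p\,ds$. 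The exponent $1-2p\leq-1$ is negative, so everything hinges on a \emph{lower} bound for the velocity $\abs{\hatumi'}$ on the connecting cylinder --- the difficulty the introduction flags as the main one --- and your argument never produces such a bound; without it the arclength reparametrisation need not even be well defined.

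This is also why your selection of $a_i^m,b_i^m$ (accumulated energy and oscillation below $\delta_i$) does not suffice. The paper instead chooses $a_i^m,b_i^{m-1}$ as the extreme points of $I_i^m$ where $\al_i(s)=\fint_{S^1}\abs{\partial_s u_i}\,d\theta$ exceeds the threshold $\de_i\ell_i^2$ with $\de_i=\max(\eps_i,\ell_i)^{1/2}$, $\eps_i=\ell_i^{-3}\norm{\tauE(u_i)}_{L^2}$. One then integrates the oscillation estimate \eqref{claim:est-osc-alpha} for $\al_i$ over the connecting interval (of length $\leq CL_i\leq C\ell_i^{-2}$) to show $\al_i\geq(1-o(1))\de_i\ell_i^2$ \emph{throughout} $[b_i^{m-1},a_i^m]$ --- i.e.\ the relevant region is a single interval --- and hence $\abs{\hatumi'}\geq\tfrac12\de_i\ell_i^2$ there; it is exactly here, in controlling the oscillation of $\al_i$ over a length $\sim\ell_i^{-2}$ by $o(\de_i\ell_i^2)$, that the rate $o(\inj^2)$ is used sharply, and the same choice of threshold simultaneously gives $\int\al_i\to 0$ on the discarded parts of $I_i^m$, which is what yields \eqref{claim:no-energy-loss-infty} and \eqref{claim:no-neck-infty}. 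Your proof would need to be restructured around this velocity lower bound; the averaged ODE and the Jensen--H\"older step you describe are correct but are not where the theorem is won or lost.
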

This result is also sharp as we cannot expect the connecting curves to look like geodesics if the rate of decay of the tension is no faster than $\inj(T^2,g_i)^2$, compare Section \ref{sect:tori}.

One of the main difficulties in the proof of the above results is that if we reparametise a curve 
with small velocity 
by arclength then this leads to a sharp increase of the tension.
Roughly speaking for a curve to look like a geodesic we need its tension to be small compared to the square of its velocity. 
A key step towards proving that the connecting curves $\hatumi$ converge to geodesics is hence to obtain a \textit{lower bound} on the velocity with which 
these curves are parametrised. To prove our results we then want to split each interval $[s_i^{m-1}+\La, s_i^{m}-\La]$ 
into a
\begin{itemize}
\item 'relevant' region on which the velocity of $\hatumi$ is large enough that even after reparametrising by arclength the tension still tends to zero.
\item its complement in $[s_i^{m-1}+\La, s_i^{m}-\La]$ for which we want to prove that the velocity of $\hatumi$ so small  that the restriction of $u_i$ to this set cannot make a relevant contribution to the limiting connecting curve nor lead to a loss of energy in the limit $i\to \infty $ and $\La\to \infty$.
\end{itemize}

Importantly, we will be able to prove that the 
 'relevant' region of each such interval is connected. 
This is crucial 
 as we cannot hope to control the tension of the reparametrised curves on sets where $\abs{\hatumi'}$ is too small. If the relevant region was not an interval then the curve $v_i^m$ could change its direction in an uncontrolled way on the irrelevant set, so the best we could hope for would be to obtain a limiting curve that is piecewise geodesic but has corners, rather than a single geodesic arc between the images of neighbouring bubble branches.

Our proofs are based on suitable estimates for almost harmonic maps from degenerating cylinders which we derive in Section \ref{sect:energy}. We then use these estimates in Section \ref{sect:3} to prove our main result Theorem \ref{thm:1} on almost harmonic maps from degenerating hyperbolic surfaces. We conclude this paper with a short Section \ref{sect:tori} where we explain how these arguments can be modified, and indeed simplified, in the case where the domain is a torus.

\section{Estimates for maps from cylinders on low energy regions}\label{sect:energy}
The proof of our main result is based on a 
precise understanding of the behaviour of 
almost harmonic maps away from the regions where bubbles from. We recall from \cite{HRT} that if a sequence of almost harmonic maps converges to a full bubble branch as recalled in Theorem \ref{thm:HRT} then 
for any $\eps>0$ there exist $\La_0$ and $i_0$  so that 
$$E(u_i, \Cyl_1(s))\leq \eps \text{ for all } i\geq i_0 \text{ and } s\in [s_i^{m-1}+\La_0,s_i^m-\La_0],$$
  where here and in the following 
 we write for short $\Cyl_{\lambda}(s):=[s-\lambda,s+\lambda]\times S^1$ and  $\Cyl_{\lambda}= \Cyl_{\lambda}(0)$.

In this section we recall and derive estimates on almost harmonic maps on such low energy regions. 
To state and prove some of these estimates it is more convenient to work with respect to the flat metric $g_E=ds^2+d\theta^2$.
 As the 
energy is conformally invariant and as the euclidean and the hyperbolic tension of maps are related by \eqref{est:relation-tension} we can then easily translate these results to make them applicable to almost harmonic maps from hyperbolic collars. 

So let $X\geq 2$ be any fixed number and let $u\in H^2(\Cyl_X, N)$. We set
\beq 
\label{def:A}
\mathcal{A}(u)=\{s: \abs{s}\geq X-1 \text{ or } E(u,\Cyl_1(s))\geq \eps_0\},\eeq
$\eps_0=\eps_0(N)>0$ determined below, 
and recall that away from $\A(u)\times S^1$ we have the following well known bound on the angular energy 
\beq
\label{def:theta}
\vartheta(s):=\int_{\{s\}\times S^1}\abs{u_\theta}^2 d\theta.
\eeq
\begin{lemma}[Standard angular energy estimates, see e.g Lemma 2.1 \cite{HRT} or Lemma 2.13 \cite{Topping-quantisation}] \label{lemma:ang-energy-standard}
There exists  $\eps_0=\eps_0(N)>0$ 
so that 
for any $u\in H^2(\Cyl_X,N)$ 
and any 
 $s\in [-X,X] \setminus \A(u)$
\beq 
\label{est:ang-energy-standard}
\vartheta(s)\leq C e^{-\dist(s,\A(u))}+C\int_{\Cyl_X} \abs{\tau_{g_E}(u)}^2e^{-\abs{s-q}} dq d\theta.
\eeq 
\end{lemma}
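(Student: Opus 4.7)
The plan is, on each connected component $I=(s_-,s_+)$ of $[-X,X]\setminus\A(u)$, to establish a pointwise differential inequality
\beq \label{eq:plan-diff-ineq}
\vartheta''(s)\geq\vartheta(s)-C\int_{\{s\}\times S^1}\abs{\tau_{g_E}(u)}^2\,d\theta,
\eeq
to bound $\vartheta$ at the endpoints $s_\pm$ by an absolute constant, and then to apply a Green's function comparison for the operator $\partial_s^2-1$ on $I$ (whose free-space Green's function is $\tfrac12 e^{-|s-q|}$). This produces precisely the exponential boundary term and the tension convolution appearing in the statement.

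To establish \eqref{eq:plan-diff-ineq} I would first invoke $\eps$-regularity for almost harmonic maps: for $\eps_0=\eps_0(N)$ sufficiently small the hypothesis $E(u,\Cyl_1(s))<\eps_0$ yields $\norm{du}_{L^\infty(\Cyl_{1/2}(s))}^2\leq C\eps_0$ and places the image of $u$ into a convex neighbourhood of $N\embed\R^N$, so that $u$ obeys the tension equation $u_{ss}+u_{\theta\theta}=\tau_{g_E}(u)-A(u)(du,du)$. Fourier decomposing $u(s,\theta)=\sum_{k\in\Z} a_k(s)e^{ik\theta}$, each mode with $k\neq 0$ satisfies $(a_k)_{ss}=k^2 a_k+g_k$ with $g:=\tau_{g_E}-A(u)(du,du)$. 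Computing $(\abs{a_k}^2)''\geq 2k^2\abs{a_k}^2-2\abs{a_k}\abs{g_k}\geq k^2\abs{a_k}^2-\abs{g_k}^2/k^2$ via Young's inequality, then weighting by $k^2$, summing over $k\neq 0$ and invoking Parseval together with $k^2\geq 1$, yields $\vartheta''(s)\geq\vartheta(s)-\norm{g(s,\cdot)}_{L^2(S^1)}^2$. The quartic contribution $\norm{A(u)(du,du)}_{L^2(S^1)}^2\leq C\norm{du}_{L^\infty}^2\int_{S^1}\abs{du}^2\leq C\eps_0\vartheta(s)+C\eps_0^2$ is then absorbed into the leading $\vartheta(s)$ on the right by taking $\eps_0$ small.

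For the comparison step, the $\eps$-regularity bound gives $\vartheta(s_\pm)\leq 2\pi\norm{u_\theta}_{L^\infty}^2\leq C\eps_0$ at the endpoints of $I$, supplying the required boundary data. A standard maximum principle / Green's function argument for the operator $\partial_s^2-1$ on $I$ then yields $\vartheta(s)\leq Ce^{-\dist(s,\partial I)}+C\int_I\int_{S^1}\abs{\tau_{g_E}(u)}^2(q,\theta)\,e^{-|s-q|}\,d\theta\,dq$, and enlarging the integration domain from $I$ to $\Cyl_X$ together with the identity $\dist(s,\partial I)=\dist(s,\A(u))$ for $s\in I$ gives the claim.

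The main subtlety is to preserve the \emph{sharp} exponential rate $1$ in \eqref{eq:plan-diff-ineq}: this rate equals the first non-zero eigenvalue of $-\partial_\theta^2$ on $S^1$, and any lossy absorption of the quartic nonlinearity $A(u)(du,du)$ would degrade it, spoiling the $e^{-\dist(s,\A(u))}$ decay claimed in the statement. Making this work cleanly requires exploiting simultaneously the $L^\infty$ bound $\norm{du}_{L^\infty}^2\leq C\eps_0$ and the smallness of the total energy on $\Cyl_1(s)$, rather than applying a crude Cauchy--Schwarz bound to the full $L^2$ norm of $g$.
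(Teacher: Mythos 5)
Your overall strategy --- a second-order differential inequality for $\vartheta$ on each component of $[-X,X]\setminus\A(u)$, comparison with the Green's function of $\partial_s^2-1$, and order-one boundary data supplied by $\eps$-regularity --- is exactly the standard route taken in the cited references, and the skeleton is sound. However, there is a genuine gap in your treatment of the quartic nonlinearity. After applying Young's inequality mode by mode you are reduced to bounding $\sum_{k\neq 0}\abs{g_k}^2$ by $\norm{g(s,\cdot)}_{L^2(S^1)}^2$, and your estimate of the $A(u)(du,du)$ contribution produces $C\eps_0\vartheta(s)+C\eps_0^2$. The first term absorbs, but the additive constant $C\eps_0^2$ does not: it arises from $\norm{du}_{L^\infty}^2\int_{S^1}\abs{u_s}^2$, which has no reason to be small compared with $\vartheta(s)$ (consider $u$ depending on $s$ only, where $\vartheta\equiv 0$ while $A(u)(u_s,u_s)\not\equiv 0$). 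Feeding $\vartheta''\geq\vartheta-C\int_{S^1}\abs{\tauE(u)}^2d\theta-C\eps_0^2$ into the comparison argument yields $\vartheta(s)\leq Ce^{-\dist(s,\A(u))}+C\int_{\Cyl_X} e^{-\abs{s-q}}\abs{\tauE(u)}^2+C\eps_0^2$, and the last term is a fixed constant that neither decays away from $\A(u)$ nor vanishes with the tension, so the lemma does not follow. Your closing remark senses a difficulty but points at the wrong one: the exponential rate is not the bottleneck, since Wirtinger's inequality on the nonzero modes actually delivers the coefficient $2$ (for harmonic maps $\vartheta$ decays with rate $2$), leaving ample slack to absorb $C\eps_0\vartheta$; the obstruction is the additive constant.

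The standard repair is structural rather than a matter of smallness. Weighting the mode identity by $k^2$ and summing \emph{before} applying Young reconstitutes the pairing $\int_{S^1}u_{\theta\theta}\cdot g\,d\theta$. Since $A(u)(du,du)$ is normal to $N$ and the normal component of $u_{\theta\theta}$ equals $-A(u)(u_\theta,u_\theta)$, which is quadratic in $u_\theta$ alone, one obtains $\abs{\int_{S^1}u_{\theta\theta}\cdot A(u)(du,du)\,d\theta}\leq C\norm{du}_{L^\infty}^2\vartheta(s)\leq C\eps_0\vartheta(s)$ with no additive constant, and only the genuine tension term is estimated by Cauchy--Schwarz. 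With that replacement your argument closes. A further small point: the constants in $\eps$-regularity degrade when $\norm{\tauE(u)}_{L^2(\Cyl_1(s))}$ is large, so one should first dispose of the case in which the right-hand side of \eqref{est:ang-energy-standard} is of order one (as remarked after the lemma, this case is trivial by the $H^2$ estimates and the trace theorem) and only then assume local smallness of the tension.
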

In this section $C$ denotes a constant that only depends on $N$ and an upper bound 
on the energy of $u$.

We note that while the angular energy estimates in \cite{HRT} are only stated for maps with small tension, the $H^2$ estimates stated in Lemma \ref{lemma:H2-standard} below and the trace theorem ensure that the
above estimate is trivially true if the right hand side of \eqref{est:ang-energy-standard} is of order one. 

We also need estimates on the second derivatives of $u$.  Throughout the paper we use the extrinsic viewpoint of considering maps $u$ to $N\hookrightarrow \R^N$ as maps into the surrounding Euclidean space to define higher order derivatives. 
Away from the high energy region we have the following standard $H^2$ estimates.

\begin{lemma}\label{lemma:H2-standard} (see e.g. \cite[Lemma 2.1]{DT} or \cite[Lemma 2.9]{Topping-quantisation})
There exists $\eps_0=\eps_0(N)>0$ so that 
for any $u\in H^2(\Cyl_X,N)$ 
and any 
 $s_0\in [-X,X] \setminus \A(u)$
\beq \label{est:H2-standard}
\int\phisn ^2\abs{\na^2 u}^2 dsd\th\leq C\norm{\phisn \tau_{g_E}(u)}_{L^2(\Cyl_1(s_0),g_E)}^2+ C E(u,\Cyl_1(s_0))
\eeq
and
\beq \label{est:L4-standard}
\int\phisn^2(\abs{u_s}^4+\abs{u_\th}^4) dsd\th \leq CE(u,\Cyl_1(s_0))\cdot \big[ 
\norm{\phisn \tau_{g_E}(u)}_{L^2(\Cyl_1(s_0),g_E)}^2+ E(u,\Cyl_1(s_0))\big],
\eeq
for $\phisn\in C_c^\infty(s_0-1,s_0+1)$ with $\phi\equiv 1$ on $[s_0-\half,s_0+\half]$ and $\abs{\phi'}\leq 4$.
\end{lemma}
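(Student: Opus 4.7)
The starting point is the pointwise identity $\Delta u = \tau_{g_E}(u) - A(u)(du,du)$, which follows from $(N,g_N) \hookrightarrow \R^N$ being an isometric embedding and yields the pointwise bound $\abs{\Delta u} \leq \abs{\tau_{g_E}(u)} + C\abs{du}^2$, with $C$ depending only on $\sup_N \abs{A}$. The plan is a short bootstrap combining an interior elliptic $H^2$-estimate with a two-dimensional Ladyzhenskaya-type inequality, exploiting the smallness hypothesis $s_0 \notin \mathcal{A}(u)$, which is precisely $E(u, \Cyl_1(s_0)) < \eps_0$.

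First I would derive a standard interior Calder\'on--Zygmund estimate on the flat cylinder: testing each component of $\Delta u = \tau_{g_E}(u) - A(u)(du,du)$ against $\phisn^2 \Delta u$ and integrating by parts twice (with $\phisn \equiv 1$ on $[s_0-\half,s_0+\half]$, $\supp \phisn \subset (s_0-1,s_0+1)$, and $\abs{\phisn'} \leq 4$) gives
\begin{equation*}
\int \phisn^2 \abs{\na^2 u}^2 \, dsd\th \leq C \int \phisn^2 \abs{\tau_{g_E}(u)}^2 \, dsd\th + C \int \phisn^2 \abs{du}^4 \, dsd\th + C E(u, \Cyl_1(s_0)),
\end{equation*}
where the last term collects all contributions arising from $\phisn'$ and $\phisn''$.

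Second, to handle the quartic term I would apply the two-dimensional Ladyzhenskaya inequality componentwise to $\phisn u_{x_i}$ (which has compact support in $\R\times S^1$):
\begin{equation*}
\int \phisn^2 \abs{u_{x_i}}^4 \, dsd\th \leq C \norm{\phisn u_{x_i}}_{L^2}^2 \cdot \norm{\na(\phisn u_{x_i})}_{L^2}^2 \leq C E(u, \Cyl_1(s_0)) \Bigl( \int \phisn^2 \abs{\na^2 u}^2 \, dsd\th + E(u, \Cyl_1(s_0)) \Bigr).
\end{equation*}
Choosing $\eps_0=\eps_0(N)$ small enough that the prefactor satisfies $CE(u, \Cyl_1(s_0)) \leq \half$, the quartic term in the previous display can be absorbed into the left-hand side, yielding \eqref{est:H2-standard}. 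Substituting the resulting $H^2$ bound back into the Ladyzhenskaya inequality delivers \eqref{est:L4-standard} immediately.

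The only (minor) obstacle is regularity-theoretic: since $u$ is merely $H^2$, one cannot literally test the equation against $\phisn^2 \Delta u$. This is handled routinely either by a difference-quotient argument in $s$ and $\th$, or by smoothing $u$ via a mollifier, performing the integrations by parts for the smoothed map, and passing to the limit using the smoothness and boundedness of $A$ on $N$. All constants depend only on $N$ and a uniform energy bound, as claimed, and the restriction of the estimate to $s_0 \notin \mathcal{A}(u)$ is used exactly once, to close the absorption step.
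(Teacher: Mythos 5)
Your argument is correct, and it is essentially the standard proof: the paper itself does not prove this lemma but cites \cite[Lemma 2.1]{DT} and \cite[Lemma 2.9]{Topping-quantisation}, where exactly this test-function/Ladyzhenskaya/absorption scheme is carried out; it is also the same scheme the paper uses for the angular analogue in Lemma \ref{lemma:H2-angular-better}. Two cosmetic remarks. First, applying Ladyzhenskaya to $\phisn u_{x_i}$ literally bounds $\int \phisn^4\abs{u_{x_i}}^4$, not $\int\phisn^2\abs{u_{x_i}}^4$; the cleanest fix, and the one the paper uses in the proof of Lemma \ref{lemma:H2-angular-better}, is to view $\int\phisn^2\abs{u_{x_i}}^4$ as $\norm{\phisn\abs{u_{x_i}}^2}_{L^2}^2$ and invoke the embedding $W^{1,1}_0(\Cyl_1)\embed L^2(\Cyl_1)$, which gives directly
\beqs
\int\phisn^2\abs{u_{x_i}}^4\leq C\norm{\na(\phisn\abs{u_{x_i}}^2)}_{L^1}^2\leq C E(u,\Cyl_1(s_0))\Big(\int\phisn^2\abs{\na^2u}^2+E(u,\Cyl_1(s_0))\Big).
\eeqs
Second, on $\R\times S^1$ (as opposed to $\R^2$) the Gagliardo--Nirenberg inequality carries a lower-order term $\norm{v}_{L^2}^4$; this is harmless since it is bounded by $CE^2\leq CE\cdot E$ and so sits inside the bracket you already have. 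With these adjustments the absorption step closes exactly as you describe, using $E(u,\Cyl_1(s_0))<\eps_0$ from the definition \eqref{def:A} of $\A(u)$.
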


In the following we fix $\eps_0=\eps_0(N)>0$ so that both of these lemmas apply and denote by $\A(u) $ the resulting set defined by \eqref{def:A}. 

For integrals that involve angular derivatives of $u$ we obtain the following stronger estimates that only involve the angular energy
$E_\theta(u,\Omega):=\half \int_\Omega \abs{u_\theta}^2 ds d\theta$ instead of the full energy. 

\begin{lemma}\label{lemma:H2-angular-better}
For any $u\in H^2(\Cyl_X, N)$ and any $s_0\in [-X,X]\setminus \A(u)$ we have 
\beq 
\label{est:H2-ang} \int \phisn^2(\abs{u_{\th\th}}^2+\abs{u_{s\th}}^2) dsd\th \leq  C\norm{\phisn \tau_{g_E}(u)}_{L^2(\Cyl_1(s_0),g_E)}^2+ C
E_\theta(u,\Cyl_1(s_0)),
\eeq
$\phisn$ as in Lemma \ref{lemma:H2-standard},
as well as 
\beq \label{est:L4-ang}
\int \phisn ^2\abs{u_\th}^4 dsd\th\leq 
E_\theta(u,\Cyl_1(s_0))\cdot \big[ 
\norm{\phisn \tau_{g_E}(u)}_{L^2(\Cyl_1(s_0),g_E)}^2+ E_\theta(u,\Cyl_1(s_0)) \big].
\eeq
\end{lemma}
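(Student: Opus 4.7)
The key observation driving the improvement from Lemma \ref{lemma:H2-standard} to \eqref{est:H2-ang}--\eqref{est:L4-ang} is the pointwise orthogonality
$$u_\theta \cdot A(u)(X,Y) = 0 \quad\text{for all } X, Y \in \R^N,$$
which holds because $A(u)(X,Y)$ takes values in the normal bundle $N_uN$ while $u_\theta \in T_uN$. This asymmetry between the two coordinate directions (no analogous identity holds for $u_s$) is exactly what allows $E$ to be replaced by $E_\theta$ on the right-hand side.

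To prove \eqref{est:H2-ang}, the plan is to test the equation $\Delta u = \tau_{g_E}(u) - A(u)(du,du)$ against $\phi^2 u_{\theta\theta}$. A short integration by parts in both $s$ (using compact support of $\phi$) and $\theta$ (using periodicity) yields
$$\int \phi^2 (|u_{s\theta}|^2 + |u_{\theta\theta}|^2) = \int \phi^2 u_{\theta\theta}\cdot \tau_{g_E}(u) - \int \phi^2 u_{\theta\theta} \cdot A(u)(du,du) - 2\int \phi \phi' u_\theta \cdot u_{s\theta}.$$
The first and last terms on the right are routinely absorbed by Young's inequality, contributing at most $\tfrac12 \int \phi^2 (|u_{s\theta}|^2 + |u_{\theta\theta}|^2) + C\|\phi\tau_{g_E}(u)\|_{L^2}^2 + CE_\theta$. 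For the crucial middle term, differentiating the orthogonality in $\theta$ gives $u_{\theta\theta}\cdot A(u)(du,du) = -u_\theta \cdot \partial_\theta[A(u)(du,du)]$. Expanding $\partial_\theta$, the contributions of the form $A(u)(du_\theta, du)$ are again normal and so vanish after dotting with $u_\theta$; only the "connection" terms $(DA)(u)u_\theta(du,du)$ survive, and these obey the pointwise bound $|(DA)(u)u_\theta(du,du) \cdot u_\theta| \leq C|u_\theta|^2 |\nabla u|^2$. Thus
$$\left|\int \phi^2 u_{\theta\theta} \cdot A(u)(du,du)\right| \leq C \int \phi^2 |u_\theta|^2 |\nabla u|^2.$$

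To close the argument, apply Cauchy--Schwarz followed by the 2D Ladyzhenskaya inequality on a cutoff of $u_\theta$, obtaining $\int \phi^2 |u_\theta|^4 \leq CE_\theta(E_\theta + H^1_\theta)$, where $H^1_\theta := \int \phi^2(|u_{s\theta}|^2 + |u_{\theta\theta}|^2)$, and combine with the standard $L^4$ bound $\int \phi^2 |\nabla u|^4 \leq CE(E + \|\phi\tau_{g_E}(u)\|_{L^2}^2)$ from Lemma \ref{lemma:H2-standard}. A careful round of Young's inequality, arranged so that each occurrence of the full energy $E$ is paired with either $\|\phi\tau_{g_E}(u)\|_{L^2}^2$, $E_\theta$ or $H^1_\theta$, produces
$$C\int \phi^2 |u_\theta|^2|\nabla u|^2 \leq C\|\phi\tau_{g_E}(u)\|_{L^2}^2 + CE_\theta + C\eps_0 H^1_\theta,$$
the small factor $\eps_0$ coming from the low-energy bound $E(u,\Cyl_1(s_0)) \leq \eps_0$ implicit in $s_0 \notin \A(u)$. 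Choosing $\eps_0 = \eps_0(N)$ smaller if necessary than in Lemma \ref{lemma:H2-standard}, the term $C\eps_0 H^1_\theta$ is absorbed into the left-hand side and \eqref{est:H2-ang} follows. The estimate \eqref{est:L4-ang} is then an immediate consequence: Ladyzhenskaya gives $\int \phi^2 |u_\theta|^4 \leq CE_\theta(E_\theta + H^1_\theta)$ and substituting the just-proved \eqref{est:H2-ang} yields the stated bound. The main obstacle is that even after the orthogonality trick pulls one factor of $|u_\theta|$ out of the nonlinearity, the residual $|\nabla u|^2$ stubbornly reintroduces the full energy; the crux of the proof is the careful bookkeeping that uses the smallness of $E$ to absorb the resulting $H^1_\theta$-contributions.
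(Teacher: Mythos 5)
Your proof is correct and follows essentially the same route as the paper: test the equation against $\phi^2 u_{\theta\theta}$, integrate by parts to shift the $\theta$-derivative onto $A(u)(du,du)$, and close using the Ladyzhenskaya-type bound $\int\phi^2|u_\theta|^4\le C E_\theta\big(E_\theta+\int\phi^2(|u_{s\theta}|^2+|u_{\theta\theta}|^2)\big)$ together with the standard $L^4$ estimate for $\nabla u$. The only (harmless) deviations are that you additionally use normality to discard the $u_\theta\cdot A(u)(\nabla u_\theta,\nabla u)$ contributions, where the paper instead keeps them and absorbs the resulting term $I_1^{1/2}(\int\phi^2|\nabla u|^4)^{1/4}I_2^{1/4}$ by Young, and that your final absorption via smallness of $\eps_0$ could equally be done with a free Young parameter, as in the paper.
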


\begin{proof}[Proof of Lemma \ref{lemma:H2-angular-better}]
Let  $I_1:=\int\phi^2(\abs{u_{s\th}}^2+\abs{u_{\th\th}}^2) dsd\th$ and 
$I_2 :=\int \phi^2\abs{u_\theta}^4ds d\th$. 
Viewing $I_2$ as the square of the $L^2$ norm of $\phi\abs{u_\th}^2$ and using that $W_0^{1,1}(\Cyl_1)$ embeds continuously into  $L^2(\Cyl_1)$ we get
\beqa \label{est:I2-by-I1} 
I_2 &\leq C\norm{\na(\phi\abs{u_\th}^2)}_{L^1}^2\leq C E_\theta(u,\Cyl_1(s_0))
\cdot \big(I_1+E_\theta(u,\Cyl_1(s_0))\big)
\eeqa
so it suffices to prove the claimed bound \eqref{est:H2-ang} on $I_1$. 
As  \eqref{est:H2-ang} already follows from Lemma \ref{lemma:H2-standard} if  $\norm{\phi\tauE(u)}_{L^2}\geq 1$
 we can furthermore assume that $\norm{\phi\tauE(u)}_{L^2}\leq 1$.

Integration by parts, using also that $u_{ss}+u_{\th\th}=\tauE(u)-A(u)(\na u,\na u)$, yields
\beqas 
I_1& =-\int \partial_s(\phi^2)u_\th u_{s\th} ds d\th +\int\phi^2u_{\th\th}(u_{ss}+u_{\th\th})d\th ds \\
&\leq C I_1^\half E_\theta(u,\Cyl_1(s_0))^\half+I_1^\half \norm{\phi\tauE(u)}_{L^2}
-\int \phi^2u_{\th\th} A(u)(\na u,\na u)ds d\th\\
&\leq \tfrac14 I_1+ C[ \norm{\phi\tauE(u)}_{L^2}^2 +E_\theta(u,\Cyl_1(s_0))
] +
\int\phi^2 u_\th \partial_\th (A(u)(\na u,\na u)) ds d\th.
\eeqas
Using \eqref{est:I2-by-I1} as well as that  $\int \phi^2\abs{\na u}^4 \leq C$, compare \eqref{est:L4-standard}, we can bound 
\beqas 
\int\phi^2 u_\th \partial_\th (A(u)(\na u,\na u)) ds d\th
&\leq C I_1^\half\big(\int \phi^2\abs{\na u}^4 \big)^{\frac14}\cdot I_2^{\frac{1}{4}}+C 
\big(\int \phi^2\abs{\na u}^4 \big)^{\frac12}\cdot I_2^\half\\
&\leq \tfrac14I_1+ C E_\theta(u,\Cyl_1(s_0))
\eeqas 
which, when inserted into the previous estimate, gives \eqref{est:H2-ang}. 
\end{proof}

Combined with Lemma \ref{lemma:ang-energy-standard} we hence obtain that for any $s$ with $\dist(s,\A(u))\geq 2$ 
\beq
\label{est:away-from-A}
\norm{\abs{u_{\theta\theta}}+\abs{u_{s\theta}}+\abs{u_\theta}^2}_{L^2(\Cyl_1(s))}^2 +\vartheta(s)
\leq C R_u(s),
\eeq 
where here and in the following proofs we use the shorthand 
\beq
\label{eq:def-R} 
R_u(s):= \int_{\Cyl_X} \abs{\tau_{g_E}(u)}^2e^{-\abs{s-q}} dq d\theta
+ e^{-\dist(s,\A(u))}.
\eeq
We can use \eqref{est:away-from-A} to prove 
\begin{lemma}\label{lemma:alpha-velocity}
For any $u\in H^2(\Cyl_X,N)$ and any $s_0$ with $\dist(s_0,\A(u))\geq 2$ we have 
\beqa
\label{claim:est-osc-alpha}
\osc_{[s_0-1,s_0+1]}\fint_{S^1} \abs{\partial_s u(\cdot,\th)}d\th& \leq  C\norm{u_{\th\th}}_{L^1(\Cyl_1(s_0),g_E)}+C\norm{\tauE(u)}_{L^1(\Cyl_1(s_0),g_E)}
\leq C R_u(s_0)^{\half}
\eeqa
as well as 
\beq
\label{claim:alpha-beta}
\babs{\fint_{S^1} u_s(s_0,\theta)d\th }\geq  \fint_{S^1} \abs{u_s(s_0,\theta)} d\th -C R_u(s_0)^\half.
\eeq
\end{lemma}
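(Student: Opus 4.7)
For \eqref{claim:est-osc-alpha}, let $\alpha(s) := \fint_{S^1} |u_s(s, \cdot)|\, d\theta$. Where $|u_s|>0$ one has $\partial_s|u_s| = (u_s \cdot u_{ss})/|u_s|$; substituting $u_{ss} = \tau_{g_E}(u) - A(u)(du, du) - u_{\theta\theta}$ and using the orthogonality $u_s \cdot A(u)(du, du) = 0$ (valid since $u_s \in T_uN$ while $A$ takes values in $(T_uN)^\perp$), the $|du|^2$ contribution drops out, yielding
\[|\partial_s |u_s|| \leq |\tau_{g_E}(u)| + |u_{\theta\theta}|\quad \text{a.e.},\]
with the null set $\{|u_s|=0\}$ handled by smoothing $|u_s|$ to $\sqrt{|u_s|^2 + \epsilon}$ and letting $\epsilon \to 0$. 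Averaging in $\theta$ and integrating in $s \in [s_0-1, s_0+1]$ gives the first inequality; the second follows from Cauchy--Schwarz on the bounded cylinder $\Cyl_1(s_0)$ together with $\|u_{\theta\theta}\|_{L^2(\Cyl_1(s_0))}^2 \leq CR_u(s_0)$ from \eqref{est:away-from-A} and $\|\tau_{g_E}(u)\|_{L^2(\Cyl_1(s_0))}^2 \leq eR_u(s_0)$ from the definition of $R_u$.

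For \eqref{claim:alpha-beta}, set $\beta(s):=\fint u_s\, d\theta$, so that $|\beta|\leq\alpha$ by Jensen. Combining $\alpha^2 \leq \fint |u_s|^2 d\theta$ with the variance identity $\fint |u_s - \beta|^2 d\theta = \fint|u_s|^2 d\theta - |\beta|^2$ yields
\[(\alpha(s_0) - |\beta(s_0)|)^2 \leq \alpha(s_0)^2 - |\beta(s_0)|^2 \leq \fint_{S^1} |u_s(s_0, \theta) - \beta(s_0)|^2 d\theta,\]
and Poincar\'e on $S^1$ applied to the mean-zero map $\theta \mapsto u_s(s_0, \theta) - \beta(s_0)$ reduces the claim to the slice estimate
\[\int_{S^1} |u_{s\theta}(s_0, \theta)|^2 d\theta \leq C R_u(s_0).\]

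The crux, and the main obstacle, is this slice bound. Lemma \ref{lemma:H2-angular-better} with $\phi$ supported in $[s_0-1, s_0+1]$, combined with $E_\theta(u, \Cyl_1(s_0)) \leq CR_u(s_0)$ (from integrating $\vartheta(s) \leq CR_u(s) \leq eR_u(s_0)$ over $s \in [s_0-1,s_0+1]$), gives only the cylinder-integrated $\|u_{s\theta}\|_{L^2(\Cyl_{1/2}(s_0))}^2 \leq CR_u(s_0)$; no standard trace theorem gets to the slice from mere $u \in H^2$. To work around this, I would choose by pigeonhole $s^* \in [s_0 - 1/2, s_0 + 1/2]$ with $\int_{S^1} |u_{s\theta}(s^*, \theta)|^2 d\theta \leq CR_u(s_0)$, so the chain above yields $\alpha(s^*) - |\beta(s^*)| \leq CR_u(s_0)^{1/2}$, and then transfer to $s_0$: \eqref{claim:est-osc-alpha} controls $|\alpha(s_0) - \alpha(s^*)|$, while $||\beta(s_0)| - |\beta(s^*)||$ is handled by computing $\tfrac{d}{ds}|\beta| = \beta \cdot \beta' / |\beta|$ with $\beta' = \fint(\tau_{g_E}(u) - A(u)(du, du))\, d\theta$, and exploiting that $\beta(s)/|\beta(s)|$ lies in $T_{\hat u(s)}N$ up to an error $O(R_u(s_0)^{1/2})$ (by the angular-energy estimate, which gives $|u(s, \cdot) - \hat u(s)| = O(R_u(s_0)^{1/2})$, with $\hat u := \pi_N(\bar u)$). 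This renders the normal term $A(u)(u_s, u_s)$ an $O(R_u(s_0)^{1/2} |u_s|^2)$ perturbation that integrates against the bounded energy on $\Cyl_1(s_0)$ to the required $O(R_u(s_0)^{1/2})$.
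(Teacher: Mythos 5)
Your proof of \eqref{claim:est-osc-alpha} is the paper's own argument verbatim ($\partial_s\abs{u_s}$ is bounded a.e.\ by $\abs{u_{\th\th}}+\abs{\tauE(u)}$ thanks to $u_s\perp A(u)(\na u,\na u)$, then Cauchy--Schwarz and \eqref{est:away-from-A}), so that part is fine. For \eqref{claim:alpha-beta} you take a genuinely different route: the reduction via the variance identity and Wirtinger to a slice bound on $\int_{S^1}\abs{u_{s\th}(s_0,\cdot)}^2$, and the pigeonhole selection of $s^*$ to replace the unavailable trace estimate, are both correct and arguably cleaner than the paper's argument up to that point. The paper instead never touches a single slice of $u_{s\th}$: it estimates $\osc_I\abs{\bar u'}^2$ by integrating $\tfrac{d}{ds}\abs{\bar u'}^2=2\bar u'\cdot\fint(\tauE(u)-A(u)(\na u,\na u))$, handling the curvature term through the \emph{pointwise} orthogonality $u_s\perp A(u)(\na u,\na u)$, which lets one write $\bar u'\cdot\fint A=\fint A\cdot(\bar u'-u_s)$ and bound it by $C(\int_{S^1}\abs{\na u}^4)^{\half}(\int_{S^1}\abs{u_{s\th}}^2)^{\half}$ without ever dividing by $\abs{\bar u'}$; it then compares $\int_I\al^2$ with $\int_I\abs{\bar u'}^2$ and concludes with a case analysis on whether $\al(s_0)\geq C_1R_u(s_0)^{\half}$.

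The gap in your proposal is the final transfer of $\abs{\beta}$ from $s^*$ back to $s_0$. Writing $\tfrac{d}{ds}\abs{\beta}=\beta\cdot\beta'/\abs{\beta}$ forces you to control the angle between the \emph{unit} vector $\beta/\abs{\beta}$ and $T_{\hat u}N$. Since $\beta-d\pi_N(\hat u)\beta=\fint(d\pi_N(u)-d\pi_N(\bar u))(u_s)\,d\th$ is only $O(\vartheta^{\half}\al)$, the distance of $\beta/\abs{\beta}$ to $T_{\hat u}N$ is $O(\vartheta^{\half}\al/\abs{\beta})$, not $O(\vartheta^{\half})=O(R_u(s_0)^{\half})$ as you assert; obtaining $\al/\abs{\beta}=O(1)$ is essentially the content of \eqref{claim:alpha-beta} itself, so the step is circular as written (and $\beta$ could a priori vanish, making $\beta/\abs{\beta}$ undefined). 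This is repairable in two ways. Either run a continuity argument: dispose first of the trivial case $\al(s_0)\leq C_1R_u(s_0)^{\half}$, note that at $s^*$ you already have $\abs{\beta(s^*)}\geq\al(s^*)-CR_u(s_0)^{\half}\geq\tfrac12\al(s_0)$, and propagate the bound $\abs{\beta}\geq\tfrac14\al$ from $s^*$ towards $s_0$, since on any subinterval where it holds your derivative estimate is legitimate and the total loss $\int\fint\abs{\tauE(u)}+C\vartheta^{\half}\int\fint\abs{\na u}^2$ is $O(R_u(s_0)^{\half})$. Or avoid the division altogether by estimating $\osc\abs{\beta}^2$ as the paper does, using $u_s\perp A(u)(\na u,\na u)$ pointwise and Wirtinger; this yields $\osc\abs{\beta}^2\leq C\al(s_0)R_u(s_0)^{\half}+CR_u(s_0)$, after which the same case analysis on the size of $\al(s_0)$ closes the argument. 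Either fix is short, but one of them must be supplied.
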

We will apply these estimates later to analyse 
the curves 
$\hat u(s):=\pi_N(\bar u(s))$, where $\bar u(s):= \fint_{\{s\}\times S^1 }u d\th$. Thanks to Lemma \ref{lemma:ang-energy-standard} $\hat u(s)$ is well defined for maps with suitably small tension and for $s$ with suitably large distance from $\A(u)$. 
As $u_s=d\pi_N(u)(u_s)$ we can furthermore estimate
\beqa
\label{est:baru-hatu-der}
\abs{\bar u'-\hat u'}= \abs{\bar u'-d\pi_N(\bar u)(\bar u')}=\babs{\fint_{S^1}  [d\pi_N(u)-d\pi_N(\bar u)] (u_s) d\th}
\leq  C\osc_{S^1} u  \fint_{S^1} \abs{u_s}\leq C \vartheta^\half  \fint_{ S^1} \abs{u_s}.
\eeqa
The lower bound \eqref{claim:alpha-beta} on $\abs{\bar u'}$ from the above lemma hence yield the following lower bound on the velocity of the connecting curve $\hat u$ 

\begin{cor}\label{cor:speed}
For any $u\in H^2(\Cyl_X,N)$ and any 
$s_0$ with 
$\dist(s_0,\A(u))\geq 2$ for which $\hat u$ is well defined in a neighbourhood of $s_0$ we have 
\beq
\abs{\hat u'(s_0)}\geq (1-C\vartheta^\half(s_0)) \fint_{S^1} \abs{u_s(s_0,\theta)} d\th -C R_u(s_0)^\half.
\eeq
\end{cor}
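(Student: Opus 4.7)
The plan is to derive the corollary simply by combining the two ingredients that have already been prepared immediately before its statement: the pointwise comparison \eqref{est:baru-hatu-der} between the derivatives of $\bar u$ and $\hat u=\pi_N\circ\bar u$, and the lower bound \eqref{claim:alpha-beta} from Lemma \ref{lemma:alpha-velocity} that estimates the size of the angular mean of $u_s$ from below by the mean of $\abs{u_s}$, up to an $R_u(s_0)^{\half}$ error.

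Concretely, I would proceed as follows. First note that $\bar u'(s_0)=\fint_{S^1} u_s(s_0,\theta)\,d\theta$, so the bound \eqref{claim:alpha-beta} reads
\[
\abs{\bar u'(s_0)}\;\geq\; \fint_{S^1}\abs{u_s(s_0,\theta)}\,d\theta \;-\; C R_u(s_0)^{\half}.
\]
Next, the reverse triangle inequality gives
\[
\abs{\hat u'(s_0)}\;\geq\; \abs{\bar u'(s_0)}\;-\;\abs{\hat u'(s_0)-\bar u'(s_0)},
\]
and the computation \eqref{est:baru-hatu-der}, which used only that $u_s=d\pi_N(u)(u_s)$ together with the Lipschitz bound $\abs{d\pi_N(u)-d\pi_N(\bar u)}\leq C\osc_{S^1}u\leq C\vartheta^{\half}$, yields the pointwise control
\[
\abs{\hat u'(s_0)-\bar u'(s_0)}\;\leq\; C\,\vartheta(s_0)^{\half}\,\fint_{S^1}\abs{u_s(s_0,\theta)}\,d\theta.
\]
Substituting both bounds into the triangle inequality and factoring out $\fint_{S^1}\abs{u_s(s_0,\theta)}\,d\theta$ delivers precisely the claimed estimate.

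The hypothesis $\dist(s_0,\A(u))\geq 2$ is used exactly where required: it lets Lemma \ref{lemma:alpha-velocity} be applied to obtain \eqref{claim:alpha-beta}, and it guarantees via Lemma \ref{lemma:ang-energy-standard} that $\vartheta(s_0)$ is small enough that $\hat u=\pi_N\circ \bar u$ is indeed well-defined near $s_0$, so that \eqref{est:baru-hatu-der} makes sense. There is no real obstacle here; the statement is a direct corollary in the literal sense, and the only point that needs to be articulated carefully is that both ingredients are available under the same distance-to-$\A(u)$ assumption, so that no additional smallness of tension or of energy needs to be invoked in the combination.
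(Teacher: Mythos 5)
Your argument is correct and is exactly the one the paper intends: the corollary is stated as an immediate consequence of combining the lower bound \eqref{claim:alpha-beta} on $\abs{\bar u'}$ with the comparison \eqref{est:baru-hatu-der} between $\bar u'$ and $\hat u'$ via the reverse triangle inequality. Your accounting of where the hypothesis $\dist(s_0,\A(u))\geq 2$ enters matches the paper's usage as well.
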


\begin{proof}[Proof of Lemma \ref{lemma:alpha-velocity}]
To establish the first claim of the lemma we can use that
$$\half \partial_s\abs{u_s}^2= u_{ss} \cdot u_s=- u_{\th\th}\cdot u_s+\tauE(u)\cdot  u_s$$
and hence that
 $\abs{\partial_s \abs{u_s}} \leq \abs{u_{\th\th}}+\abs{\tauE(u)}$ almost everywhere. This immediately implies the first estimate of \eqref{claim:est-osc-alpha} while the second estimate of \eqref{claim:est-osc-alpha} follows from \eqref{est:away-from-A} and the definition of $R_u$.

We now want to derive a similar estimate for the oscillation of  $\abs{\bar u'(s)}$  
 which we will later use to prove the second claim of the lemma. To this end we 
use that 
\beqas
\tfrac{d}{ds}\abs{\bar u'(s)}^2&
=2\bar u'(s)\fint_{\{s\}\times S^1}u_{ss} d\th=2\bar u'(s)\fint_{\{s\}\times S^1} \tauE(u)-A(u)(\na u,\na u) d\th.
\eeqas
As
 $\partial_su\perp A(u)(\na u,\na u)$ we can thus bound 
 \beqas
\abs{\tfrac{d}{ds}\abs{\bar u'(s)}^2}
&\leq 2\abs{\bar u'(s)}\fint_{\{s\}\times S^1}\abs{\tauE(u)} d\th +2\fint_{\{s\}\times S^1} \abs{A(u)(\na u,\na u)} \cdot \abs{u_s-\bar u'} d\theta\\
&\leq 2 \abs{\bar u'(s)}\fint_{\{s\}\times S^1}\abs{\tauE(u)} d\th + 
C (\int_{\{s\}\times S^1} \abs{\na u}^4)^{\half} (\int_{\{s\}\times S^1} \abs{u_{s\theta}}^2)^{\half}.
\eeqas
Integrating this estimate over 
 $I:= [s_0-\half,s_0+\half]$ and then using \eqref{est:away-from-A} and  \eqref{est:L4-standard} gives 
\beqa\label{est:robin-old}
\osc_{I}\abs{\bar u'}^2 &\leq C \sup_{I}\abs{\bar u'} \cdot \norm{\tauE(u)}_{L^2(\Cyl_\half(s_0))}+C \norm{\na u}_{L^4(\Cyl_\half(s_0))}^2  \norm{ u_{\th s}}_{L^2(\Cyl_\half(s_0))}
\\
&\leq
C\sup_{I}\abs{\bar u'}R_u(s_0)^\half+C  E(u,\Cyl_{1}(s_0))^\half R_u(s_0)+C E(u,\Cyl_{1}(s_0)) R_u(s_0)^\half\\
&\leq \sup_{I}\abs{\bar u'}R_u(s_0)^\half+
C  R_u(s_0)+C E(u,\Cyl_{1}(s_0))^\half R_u(s_0)^\half.
\eeqa
As \eqref{est:away-from-A} furthermore allows us to bound 
\beqas
E(u,\Cyl_1(s_0))&
\leq C\int_{s_0-1}^{s_0+1}\int_{S^1} \abs{u_s(s,\th)-\bar u'(s)}^2d\theta  +\abs{\bar u'(s)}^2+\vartheta(s)ds  \\
&\leq C\norm{u_{s\th}}_{L^2(\Cyl_1(s_0))}^2 +C\sup_{[s_0-1,s_0+1]}\abs{\bar u'}^2+CR_u(s_0)\leq C \sup_{[s_0-1,s_0+1]}\abs{\bar u'}^2 + CR_u(s_0),
\eeqas
we hence know that 
$\osc_{I}\abs{\bar u'}^2 \leq  C  R_u(s_0)^{\half} \sup_{I}\abs{\bar u'} +C R_u(s_0).$

We now set $\al(s):= \fint_{\{s\}\times S^1} \abs{u_s} d\theta$ and use that $\abs{\bar u'}\leq \al$ and that we have already 
established 
the bound \eqref{claim:est-osc-alpha} on the oscillation of $\al$. This allows us to conclude that 
\beqas
\osc_{I}\abs{\bar u'}^2 &\leq  C  R_u(s_0)^{\half} \sup_{I}\al +C R_u(s_0)\leq C\al(s_0) R_u(s_0)^{\half} + C R_u(s_0).
\eeqas
As $\int_I\al-\abs{\bar u'}\leq \int_{I\times S^1}\abs{u_s-\fint_{S^1} u_s}\leq \norm{u_{s\th}}_{L^2(\Cyl_\half(s_0))}\leq CR_u(s_0)^\half$ we furthermore have 
$$\int_I \al^2-\abs{\bar u'}^2 \leq 2\sup_I\al\cdot \int_I \al-\abs{\bar u'}\leq C(\al(s_0)+\osc_I\al)\cdot R_u(s_0)^\half\leq C\al(s_0) R_u(s_0)^\half+CR_u(s_0).$$
Combined, these two estimates give
\beqas
\abs{\bar u'(s_0)}^2
&\geq \int_I \al^2 -\int_I \al^2-\abs{\bar u'}^2ds -\osc_I\abs{\bar u'}^2
 &\geq \al(s_0)^2-C[\al(s_0)R_u(s_0)^\half +R_u(s_0)].
\eeqas

If $\al(s_0)\geq C_1 R_u(s_0)^\half$ for a sufficiently large but fixed $C_1\geq 1$, we hence deduce that 
$$\abs{\bar u'(s_0)}^2\geq \al(s_0)(\al(s_0)-C R_u(s_0)^\half)\geq \abs{\bar u'(s_0)}(\al(s_0)-CR_u(s_0)^\half)$$
since $\abs{\bar u'}\leq \al$ and since the term in the bracket is positive. We hence obtain the claimed bound
 \eqref{claim:alpha-beta} in this case which suffices to complete the proof of the lemma as \eqref{claim:alpha-beta} is trivially true (for $C\geq C_1$) if $\al(s_0)< C_1R_u(s_0)^\half$. 
\end{proof}
To translate the estimates derived above to the hyperbolic setting we recall that the tension with respect to the hyperbolic metric $g=\rho^2g_E$ is given by 
\beq
\label{eq:relation-tension-pw}
\tau_{g}(u)= \rho_\ell^{-2}\tauE(u)
\eeq
which in particular implies that for any $I\subset [-X(\ell),X(\ell)]$
\beq
\label{est:relation-tension}
\norm{\tauE(u)}_{L^2(I\times S^1,g_E)}\leq \sup_{I} \rho \cdot \norm{\tau_g(u)}_{L^2(I\times S^1, g)}.
\eeq
We also recall that the conformal factor $\rho=\rho_\ell$ is bounded uniformly on collars $\Col(\ell)$ for $\ell$ in a bounded range, say $\ell\in (0,\arsinh(1))$, and  that 
for every $1\leq \La\leq X(\ell)$
\beq
\label{est:rho-ends-of-collar}
C^{-1} \La^{-1}\leq \rho(X(\ell)-\La)\leq C \La^{-1}.
\eeq 
As $\abs{\log(\rho)'}= \rho \abs{\sin(\frac{\ell}{2\pi}s)}$, and thus in particular 
$\abs{\log(\rho)'}\leq \frac14$ 
 away from the ends of the collar, we can bound  
\beq \label{est:rho-al-exp}
\rho^2(s')e^{-\half\abs{s-s'}}\leq C \rho^2(s), \quad \text{ for all }s,s'\in [-X(\ell),X(\ell)]
\eeq
and hence get that 
the quantity 
$R_u$ appearing in the above lemmas is bounded by 
  \beq
  \label{est:F-by-F-hyp}
  R_u(s)\leq C\rho^2(s)  
  \int_{\Col(\ell)}\abs{\tau_{g_\ell}(u)(q,\theta)}^2 e^{-\half\abs{s-q}} dv_{g_\ell}(q,\theta)+e^{-\dist(s,\A(u))}.
  \eeq
In particular, for every interval 
 $I\subset [-X(\ell),X(\ell)]\setminus \A(u)$ 
  \beq
  \label{est:F-by-tension}
 \sup_I R_u+ 
  \int_I R_u\leq C \sup_I \rho^{2}\cdot \norm{\tau_g(u)}_{L^2}^{2}+2 e^{-\dist(I,\A(u))}
  \eeq
 where here and in the following the norm of $\tau_g(u)$ is computed over $(\Col(\ell),g)$ unless specified otherwise. 
 We will furthermore use that 
 \beq 
\label{est:F-to-half} 
 \int_I (R_u)^\half  \leq (\int_I \rho^2 ds)^\half \norm{\tau_g(u)}_{L^2}+4 e^{-\half\dist(I,\A(u))}\leq C\sup_I \rho^\half\cdot  \norm{\tau_g(u)}_{L^2}+4 e^{-\half\dist(I,\A(u))}.
  \eeq
  Here and in the following we still define the high energy set $\A(u)$ by the formula \eqref{def:A} that involves the energy of maps $u$ on cylinders of unit length with respect to $g_E$ in the corresponding collar coordinates. Similarly we continue to denote by 
   $\dist(\cdot,\A(u))$  the (euclidean) distance of numbers $s\in [-X(\ell),X(\ell)]$ respectively intervals $I\subset [-X(\ell),X(\ell)]$ from the high energy set $\A(u)$. 
   
We first combine these estimate with the well known bounds on the angular energy that we recalled in Lemma \ref{lemma:ang-energy-standard} to show
\begin{lemma}\label{lemma:transition}
Let $u:\Col(\ell)\to N$ be a $H^2$-map from a hyperbolic cylinder $(\Col(\ell), g)$ with $\ell\in (0,\arsinh(1))$. 
Then for every $s\in [-X(\ell),X(\ell)]$ we can bound 
\beqa
\label{est:us-low-energy}
\int_{\{s\}\times S^1} \abs{u_s}^2 -\abs{u_\theta}^2 d\theta
&\leq C\ell +C\rho(s) \norm{\tau_g(u)}_{L^2}
\eeqa
while for every interval  $I\subset [-X(\ell),X(\ell)]\setminus \A(u)$
\beq\label{claim:lemma-energy-transition}
E(u,I\times S^1) \leq C e^{-\dist(I,\A(u))}+ 
C\sup_I\rho^2 \cdot \norm{\tau_g(u)}_{L^2}^2
+C\abs{I} \cdot(\ell+\sup_{I}\rho \cdot \norm{\tau_g(u)}_{L^2})
\eeq
and \beqa 
\label{claim:lemma-I_j-1} 
\osc_{I\times S^1} u \leq &  C   e^{-\half\dist(I,\A(u))}+ C
\sup_I\rho^\half \norm{\tau_g(u)}_{L^2}+ C\abs{I} \cdot(\ell+\sup_{I}\rho \norm{\tau_g(u)}_{L^2})^\half.
\eeqa
\end{lemma}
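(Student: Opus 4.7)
The plan is to derive (1) by a Hopf--differential / Pohozaev identity applied to the quantity
\[
f(s) := \int_{\{s\}\times S^1}(|u_s|^2 - |u_\theta|^2)\,d\theta,
\]
which is (up to a factor) the zeroth Fourier coefficient in $\theta$ of the real part of the Hopf differential $\Phi = \tfrac14(|u_s|^2 - |u_\theta|^2 - 2iu_s\cdot u_\theta)$. For harmonic $u$ the differential $\Phi\,dz^2$ is holomorphic on the cylinder so $f$ is constant, and for almost harmonic $u$ integration by parts in $\theta$ combined with the orthogonality $A(u)(\nabla u,\nabla u)\perp u_s$ gives the identity
\[
f'(s) = 2\int_{\{s\}\times S^1} \tau_{g_E}(u)\cdot u_s\,d\theta = 2\int_{\{s\}\times S^1} \rho^2\,\tau_g(u)\cdot u_s\,d\theta.
\]

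To pass from this to the bound claimed in (1), I first observe that $\int_{-X(\ell)}^{X(\ell)} f(s)\,ds \leq 2E(u) \leq C$, while $2X(\ell)\sim 2\pi^2/\ell$, so the mean of $f$ on the collar is $O(\ell)$ and a Chebyshev-type selection yields some $s_1$ with $f(s_1)\leq C\ell$. Writing $f(s)=f(s_1)+\int_{s_1}^{s}f'$ and applying Cauchy--Schwarz with the weight $\rho^2$ (which gives $\|\tau_g(u)\|_{L^2(g)}$ on one factor and the bounded $\|\rho u_s\|_{L^2(g_E)}$ on the other) produces a correction controlled by $\sup\rho\cdot\|\tau_g(u)\|_{L^2}$. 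The sharper $\rho(s)$ factor in the statement requires a more delicate localisation in $s$, exploiting the near-constancy $|\log\rho|'\leq 1/4$ away from the ends of the collar and the exponential comparison \eqref{est:rho-al-exp}; this is the step I expect to be the main technical obstacle.

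Once (1) is in hand, estimate (2) follows at once from the slice decomposition
\[
E(u, I\times S^1) = \tfrac12\int_I f(s)\,ds + \int_I \vartheta(s)\,ds,
\]
bounding the first summand via (1) and the second via the angular energy estimate of Lemma \ref{lemma:ang-energy-standard}, translated to the hyperbolic setting through \eqref{est:F-by-tension}. For (3), I split
\[
\osc_{I\times S^1} u \leq \osc_I \bar u + 2\sup_{s\in I}\osc_{\{s\}\times S^1} u,\qquad \bar u(s) := \fint_{S^1} u\,d\theta,
\]
and bound $\osc_{\{s\}\times S^1}u \leq C\vartheta(s)^{1/2}$, which together with Lemma \ref{lemma:ang-energy-standard} and the integrated form \eqref{est:F-to-half} gives exactly the $\sup_I\rho^{1/2}\|\tau_g(u)\|_{L^2}+e^{-\dist/2}$ terms. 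For $\osc_I\bar u$, use $|\bar u'(s)|\leq (\fint_{S^1}|u_s|^2d\theta)^{1/2}$, apply Cauchy--Schwarz in $s$, and substitute the bound on $\int_I\int_{S^1}|u_s|^2$ furnished by (2); this produces the $|I|(\ell+\sup_I\rho\|\tau_g\|_{L^2})^{1/2}$ contribution, and all pieces recombine to give the claimed oscillation estimate.
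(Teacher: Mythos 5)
Your overall strategy coincides with the paper's: the identity $f'(s)=2\int_{\{s\}\times S^1}\tau_{g_E}(u)\cdot u_s\,d\theta$, a mean-value selection of a slice where $f\leq C\ell$, and then slice decompositions for the energy and oscillation bounds. However, the step you yourself flag as "the main technical obstacle" --- obtaining the factor $\rho(s)$ rather than a global $\sup\rho$ --- is precisely the content of \eqref{est:us-low-energy}, and you leave it unresolved, so part (1) is not actually proved. The fix is short but essential: do not select $s_1$ anywhere in the collar. Instead, for $s\geq 0$ take $J=[s-\tfrac12X(\ell),s]$ (and the mirror interval for $s<0$). Since $\rho$ is minimal at $s=0$ and increases towards the ends, one checks $\sup_J\rho\leq\sqrt2\,\rho(s)$ (either $\sup_J\rho=\rho(s)$ when $|s|\geq X(\ell)/2$, or $\sup_J\rho\leq\rho(X(\ell)/2)\leq\sqrt2\rho(0)\leq\sqrt2\rho(s)$ otherwise), while $|J|=\tfrac12X(\ell)\geq c\ell^{-1}$ still yields $\inf_J f\leq C\ell E(u)$ by Chebyshev. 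Integrating $f'$ only over $J$ then gives $\osc_J f\leq CE(u)^{1/2}\norm{\tau_{g_E}(u)}_{L^2(J\times S^1,g_E)}\leq C\sup_J\rho\cdot\norm{\tau_g(u)}_{L^2}\leq C\rho(s)\norm{\tau_g(u)}_{L^2}$, which is \eqref{est:us-low-energy}. Your global choice of $s_1$ cannot work, since the segment joining $s_1$ to $s$ may pass through (or start in) the region where $\rho\gg\rho(s)$.

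There is also a quantitative loss in your argument for \eqref{claim:lemma-I_j-1}. Applying Cauchy--Schwarz in $s$ to $\osc_I\bar u\leq\int_I|\bar u'|$ and then inserting \eqref{claim:lemma-energy-transition} produces, besides the desired $|I|(\ell+\sup_I\rho\norm{\tau_g(u)}_{L^2})^{1/2}$, the extra terms $|I|^{1/2}e^{-\frac12\dist(I,\A(u))}$ and $|I|^{1/2}\sup_I\rho\,\norm{\tau_g(u)}_{L^2}$; since $|I|$ may be of order $\ell^{-1}$ while $\sup_I\rho$ is of order $1$, these are not controlled by the claimed $e^{-\frac12\dist(I,\A(u))}$ and $\sup_I\rho^{1/2}\norm{\tau_g(u)}_{L^2}$. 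The paper instead estimates slice-wise, $\fint_{S^1}|u_s|\,d\theta\leq C\big(f(s)^{1/2}+\vartheta(s)^{1/2}\big)$, bounds $f(s)^{1/2}$ by (1), and integrates $\vartheta^{1/2}$ directly via \eqref{est:int-theta-half}, which exploits the exponential weight in Lemma \ref{lemma:ang-energy-standard} together with $\int_I\rho^2\,ds\leq C\sup_I\rho$; this is what produces the stated $\sup_I\rho^{1/2}$ factor. Your treatment of \eqref{claim:lemma-energy-transition} and of the circle-wise oscillation term is correct and matches the paper.
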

\begin{proof}[Proof of Lemma \ref{lemma:transition}]
Given $u$ as in the lemma we let  $\psi(s)=\abs{u_s}^2-\abs{u_\th}^2$ be the 
 real part of the function that represents the Hopf-differential $\Phi(u)=(\abs{u_s}^2-\abs{u_\theta}^2-2 i u_su_\theta)(ds+ i d\theta)^2$ in collar coordinates and 
 set  $I_\psi(s):= \int_{\{s\}\times S^1} \psi$. It is well known that the antiholomophic derivative of the Hopf-differential is bounded in terms of the tension and we exploit this to write
\beqs
\partial_s I_{\psi}= \int_{S^1} \partial_s\psi=
\int_{S^1}\partial_s \psi+\partial_\theta (2u_s u_\theta) =2\int_{ S^1}  (u_{ss}+u_{\th\th}) u_s= 2\int_{S^1} \tauE(u) u_s.
\eeqs
Combined with \eqref{est:relation-tension} 
this implies that for any interval $J\subset [-X(\ell),X(\ell)]$ 
\beq
\label{est:osc-phi} \osc_{J} I_{\psi}\leq C E(u)^\half\cdot
\norm{\tauE(u)}_{L^2(J\times S^1,g_E)}\leq C \sup_J{\rho}\cdot \norm{\tau_g(u)}_{L^2}.\eeq
We apply this estimate for 
$J=[s-\half X(\ell),s]$ if $s\geq 0$ respectively for $J=[s,s+ \half X(\ell)]$ if $s<0$ as this ensures that $\sup_J\rho\leq \sqrt{2} \rho(s)$. Indeed, if 
$\abs{s}\geq \frac{X(\ell)}{2}$ then $\sup_J \rho=\rho(s)$ while
otherwise $\sup_J \rho\leq \rho(\frac{X(\ell)}{2})\leq \sqrt2 \rho(0)\leq \sqrt2\rho(s)$. 
As 
$\abs{J}=\frac12X(\ell)\geq c \ell^{-1}$ for a universal $c>0$, we can bound $ \inf_J I_\psi\leq C\ell E(u)\leq C\ell$ 
and thus obtain from \eqref{est:osc-phi} that 
\beq
I_\psi(s)\leq \inf_J I_\psi+\osc_J I_\psi\leq 
C\ell +C\rho(s) \norm{\tau_g(u)}_{L^2} \text{ for every } s\in [-X(\ell),X(\ell)],
\eeq
as claimed in \eqref{est:us-low-energy}. 

The second claim 
\eqref{claim:lemma-energy-transition} of the lemma  immediately follows by integrating \eqref{est:us-low-energy}  over the given interval $I\subset [-X(\ell),X(\ell)]\setminus \A(u)$ as
Lemma \ref{lemma:ang-energy-standard} and \eqref{est:F-by-tension} imply that
\beq \label{est:int-theta-I}
\sup_I \vartheta +
\int_I \vartheta \leq C\sup_I \rho^2\norm{\tau_g(u)}_{L^2}^2+Ce^{-\dist(I,\A(u))}.
\eeq
Similarly, from \eqref{est:us-low-energy} we obtain
 \beqas 
\osc_{I\times S^1} u&\leq 
2\sup_{s\in I} \osc_{\{s\}\times S^1} u+\frac{1}{2\pi} \int_{I\times S^1} \abs{u_s} dsd\theta\ \\
&\leq C\sup_{ I} \vartheta^{\half}+C \int_I \vartheta^\half ds 
+ C \abs{I} \cdot \sup_I \big( \int_{\{s\}\times S^1} \abs{u_s}^2-\abs{u_\theta}^2\big)^{\half}
\\
& \leq C\sup_{ I} \vartheta^{\half} +C\int_I \vartheta^{\half}+C  \abs{I} \cdot(\ell+\sup_{I}\rho \cdot \norm{\tau_g(u)}_{L^2})^\half.
 \eeqas 
As Lemma \ref{lemma:ang-energy-standard} and \eqref{est:F-to-half} imply that
 \beq
 \label{est:int-theta-half}
 \int_I \vartheta^\half  \leq C\sup_I \rho^\half\cdot \norm{\tau_g(u)}_{L^2}+Ce^{-\half\dist(I,\A(u))} 
\eeq
we hence obtain the final claim \eqref{claim:lemma-I_j-1} of the lemma.
\end{proof}

We shall later apply Lemma \ref{lemma:transition} to prove that there can be no loss of energy or formation of necks on regions of the collar with distance of order $O(\abs{\log(\ell_i)})$ from the high energy region $\A(u_i)$. Conversely, for points which have larger distance from the high energy set, and for which Lemma \ref{lemma:ang-energy-standard} hence gives a stronger bound on the angular energy, we will use the following lemma.
\begin{lemma}
\label{lemma:osc-al-log}
Let $u:\Col(\ell)\to N$ be a map from a hyperbolic cylinder $(\Col(\ell), g)$ with $\ell\in (0,\arsinh(1))$
 and let 
$I\subset [-X(\ell),X(\ell)]$ be any interval with $\dist(I,\A(u))\geq 4\abs{\log(\ell)}$. Then 
$\al(s):= \fint_{\{s\}\times S^1} \abs{u_s} d\th$ satisfies 
 \beq
 \label{est:al-difference}
 \abs{\al(s)-\al(\tilde s)}\leq C\ell^2+C\max(\rho(s),\rho(\tilde s))^\half\norm{\tau_g(u)}_{L^2(\Col(\ell),g_\ell)} \text{ for all } s,\tilde s\in I
 \eeq
 and, denoting by  $s_0$ the element of $\bar I$ with 
 $\rho(s_0)=\inf_{I} \rho$, we 
  furthermore obtain that 
\beq \label{est:al-integral}
\int_I \al\leq C \rho(s_0)^{-1}\al(s_0) +C\ell+C\rho(s_0)^{-\half}
\norm{\tau_g(u)}_{L^2(\Col(\ell),g_\ell)}.
\eeq
\end{lemma}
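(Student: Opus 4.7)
The plan is to establish the pointwise oscillation bound \eqref{est:al-difference} by integrating $|\alpha'|$ over $J := [\min(s,\tilde s),\max(s,\tilde s)]$, and then derive the integral bound \eqref{est:al-integral} by integrating the first claim over $I$ with $\tilde s = s_0$. Using $u_s\perp A(u)$, the identity $\tfrac12\partial_s|u_s|^2 = u_s\cdot(\tauE(u) - u_{\theta\theta})$ from the proof of Lemma~\ref{lemma:alpha-velocity} yields $|\partial_s|u_s||\leq |\tauE(u)|+|u_{\theta\theta}|$ pointwise, so integrating on $J\times S^1$ gives
\[
|\alpha(s)-\alpha(\tilde s)|\leq \tfrac{1}{2\pi}\bigl(\|\tauE(u)\|_{L^1(J\times S^1,g_E)}+\|u_{\theta\theta}\|_{L^1(J\times S^1,g_E)}\bigr).
\]
For the tension term I use $\tauE=\rho^2\tau_g$ and the hyperbolic Cauchy-Schwarz inequality, obtaining $\|\tauE\|_{L^1(J\times S^1,g_E)} = \int|\tau_g|\,dv_g \leq (2\pi\int_J\rho^2\,ds)^{1/2}\|\tau_g\|_{L^2(g)}$. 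An elementary collar calculation (the substitution $t = \ell s/(2\pi)$ and the inequality $\tan t\leq \sec t$) gives $\int_J\rho^2\,ds\leq C\max_J\rho$; since $\rho$ is even and monotone in $|s|$, $\max_J\rho = \max(\rho(s),\rho(\tilde s))$, delivering the $C\max(\rho(s),\rho(\tilde s))^{1/2}\|\tau_g\|_{L^2}$ contribution.

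The crux is the matching bound on $\|u_{\theta\theta}\|_{L^1(J\times S^1)}$. I apply the weighted Cauchy-Schwarz
\[
\|u_{\theta\theta}\|_{L^1(J\times S^1,g_E)} \leq \Bigl(\int_{J\times S^1}|u_{\theta\theta}|^2/\rho^2\,ds\,d\theta\Bigr)^{1/2}\bigl(2\pi\textstyle\int_J\rho^2\,ds\bigr)^{1/2},
\]
so that the same $\max\rho^{1/2}$ factor reappears. It then suffices to prove a hyperbolic-weighted $H^2$ estimate of the form $\int_{J\times S^1}|u_{\theta\theta}|^2/\rho^2\,ds\,d\theta \leq C\|\tau_g\|_{L^2(g)}^2 + C\ell^4$. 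For this I cover $J$ by overlapping unit cylinders $\Cyl_1(s_k)$ with $|s_{k+1}-s_k|=1$, apply Lemma~\ref{lemma:H2-angular-better} on each to obtain $\int_{\Cyl_1(s_k)}|u_{\theta\theta}|^2 \leq C R_u(s_k)$, and divide by $\rho(s_k)^2$ (noting that $\rho$ varies by a bounded factor on each unit cylinder). Using $\rho(q)^2 e^{-|s_k-q|/2}\leq C\rho(s_k)^2$ from \eqref{est:rho-al-exp} together with $|\tauE|^2 = \rho^4|\tau_g|^2$, the tension portion of $R_u(s_k)/\rho(s_k)^2$ transforms into $C\int_{\Col(\ell)}|\tau_g|^2 e^{-|s_k-q|/2}\,dv_g$; summing in $k$ and exploiting that $\sum_k e^{-|s_k-q|/2}$ is uniformly bounded in $q$ yields the desired $C\|\tau_g\|_{L^2(g)}^2$. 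The remaining boundary contribution $\sum_k\rho(s_k)^{-2}e^{-\dist(s_k,\A)}$ is estimated via the hypothesis $\dist(I,\A)\geq 4|\log\ell|$: combining the geometric-series bound $\sum_k e^{-\dist(s_k,\A)}\leq Ce^{-\dist(I,\A)}\leq C\ell^4$ with the bound $\rho(s_0)^{-2}\leq C\,\dist(s_0,\A)^2$ from \eqref{est:rho-ends-of-collar} in the generic one-sided case (the two-sided case $0\in I$ gives a much stronger essentially exponential saving, since $\dist(I,\A)\sim X(\ell)$).

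The integral bound \eqref{est:al-integral} follows by integrating \eqref{est:al-difference} with $\tilde s=s_0$ over $s\in I$. Since $\rho(s)\geq \rho(s_0)$ by minimality, $\alpha(s)\leq \alpha(s_0)+C\ell^2 + C\rho(s)^{1/2}\|\tau_g\|_{L^2}$, and integration yields $\int_I\alpha \leq |I|\alpha(s_0) + C|I|\ell^2 + C\|\tau_g\|_{L^2}\int_I\rho^{1/2}\,ds$. Two geometric facts about the collar close the estimate: (i) $|I|\leq C\rho(s_0)^{-1}$, a consequence of the upper bound $\rho(s)\leq C/(X(\ell)-|s|)$ in \eqref{est:rho-ends-of-collar} applied at $s_0$ together with the placement of $s_0$ as the element of $\bar I$ minimising $\rho$; (ii) $\int_I\rho^{1/2}\,ds\leq C\rho(s_0)^{-1/2}$, obtained via the substitution $t=\ell s/(2\pi)$ and the elementary bound $\int_{t_a}^{t_b}\sec^{1/2}t\,dt\leq C\cos^{1/2}(t_a)$ for $t_a<t_b<\pi/2$. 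Combined with $C|I|\ell^2\leq C\ell$ (since $|I|\leq C/\ell$), these deliver \eqref{est:al-integral}. The principal technical obstacle throughout is the hyperbolic-weighted $H^2$ bound: a naive Cauchy-Schwarz applied directly to $\|u_{\theta\theta}\|_{L^1(J\times S^1)}$ would produce a factor $|J|^{1/2}\sim\ell^{-1/2}$ spoiling the scaling, so it is essential first to move the $\rho^{-2}$ weight inside the $L^2$ norm and then to perform the local-to-global $H^2$ bookkeeping via \eqref{est:rho-al-exp}, which is precisely where the specific form of the distance hypothesis enters.
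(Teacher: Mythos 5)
Your overall strategy coincides with the paper's: both arguments rest on $\abs{\partial_s\abs{u_s}}\leq\abs{\tauE(u)}+\abs{u_{\theta\theta}}$, the local angular-energy and $H^2$ estimates on unit cylinders, and the collar computations $\int_J\rho^2\,ds\leq C\max_J\rho$, $\abs{I}\leq C\rho(s_0)^{-1}$ and $\int_I\rho^{1/2}\,ds\leq C\rho(s_0)^{-1/2}$; your treatment of the tension term and your derivation of \eqref{est:al-integral} from \eqref{est:al-difference} are correct. The gap is in the $u_{\theta\theta}$ term. First, the auxiliary inequality $\rho(s_0)^{-2}\leq C\dist(s_0,\A(u))^2$ is false in general: \eqref{est:rho-ends-of-collar} only gives $\rho(s)^{-1}\leq C(X(\ell)-\abs{s})$, and since $\A(u)$ may contain high-energy components in the \emph{interior} of the collar, $\dist(s,\A(u))$ can equal $4\abs{\log\ell}$ at a point where $\rho(s)^{-1}\sim\ell^{-1}$. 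Consequently the boundary part of your weighted $H^2$ estimate is only $\sum_k\rho(s_k)^{-2}e^{-\dist(s_k,\A(u))}\leq C\ell^{-2}e^{-\dist(I,\A(u))}\leq C\ell^{2}$, not $C\ell^4$, and this order is attained (take $\A(u)$ with a component near $s=0$ and order-one angular energy on its boundary, $\tau\equiv 0$). Second, even with this corrected input your global Cauchy--Schwarz returns $\bigl(C\norm{\tau_g(u)}_{L^2}^2+C\ell^{2}\bigr)^{1/2}\bigl(C\max_J\rho\bigr)^{1/2}$, whose tension-free part is $C\ell\,(\max_J\rho)^{1/2}$; as $\max_J\rho$ can be of order $1/\abs{\log\ell}$ this is far larger than the required $C\ell^2$, and the weaker bound would not survive the division by $\rho_i(a_i^m)\gtrsim\ell_i$ in \eqref{est:lower-al-in-a-proof}.

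The structural reason for the loss is that you take the square root of the \emph{summed} boundary terms rather than summing their square roots: $\bigl(\sum_k\rho(s_k)^{-2}e^{-\dist(s_k,\A(u))}\bigr)^{1/2}\bigl(\sum_k\rho(s_k)^{2}\bigr)^{1/2}$ dominates $\sum_k e^{-\half\dist(s_k,\A(u))}$ but can exceed it by a factor of order $\ell^{-1}$ when $\rho$ varies along $J$, because the two sums concentrate at opposite ends of the interval. The repair is to bound $\norm{u_{\theta\theta}}_{L^1(\Cyl_1(s_k))}\leq C\norm{u_{\theta\theta}}_{L^2(\Cyl_1(s_k))}\leq CR_u(s_k)^{1/2}$ locally, split $R_u^{1/2}$ into its tension and exponential parts \emph{before} summing over $k$, apply Cauchy--Schwarz only to the tension part (which recovers $C(\max_J\rho)^{1/2}\norm{\tau_g(u)}_{L^2}$), and sum the exponential part directly as a geometric series, $\sum_k e^{-\half\dist(s_k,\A(u))}\leq Ce^{-\half\dist(I,\A(u))}\leq C\ell^2$. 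This is exactly the paper's route, namely \eqref{est:F-to-half} applied to the local oscillation bound \eqref{claim:est-osc-alpha}.
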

\begin{proof}
Since $e^{-\half \dist(I,\A(u))}\leq C\ell^2$ 
we can 
integrate \eqref{claim:est-osc-alpha} from $s$ to $\tilde s$ and 
 use
\eqref{est:F-to-half} to obtain the first claim of the lemma as 
the maximum of $\rho$ on any closed interval is achieved at one of the endpoints.  

To obtain the second claim we then use that
\eqref{est:al-difference} gives
\beq
\label{est:1111} 
\al(s)\leq \al(s_0)+C\ell^2+C (2^{j}\rho(s_0))^{\half} \norm{\tau_g(u)}_{L^2} \text{ for } s\in I \text{ with } 2^{j-1}\rho(s_0)\leq \rho(s)\leq 2^j \rho(s_0).\eeq
At the same time we can use that
\beq
\label{est:2222}
\mathcal{L}^1(\{s\in[-X(\ell),X(\ell)]: \half \eta\leq \rho(s)\leq \eta\})\leq C \eta^{-1}  \text{ for any } \eta>0.\eeq
Indeed, for $\half \eta\leq \sqrt{2}\rho(0)=\frac{\ell}{\sqrt{2}\pi}$ this follows as $X(\ell)\leq C \ell^{-1}$ while for larger values of $\eta$ this set consists of two intervals
on which $\cos(\frac{\ell}{2\pi}s)\leq \frac{1}{\sqrt{2}}$ and thus 
 $\abs{\log(\rho)'}= \rho \abs{\sin(\frac{\ell}{2\pi}s)}\geq \frac{1}{\sqrt{2}} \rho\geq (2\sqrt{2})^{-1} \eta$. Hence these intervals have length no more than $2\sqrt{2}\log 2\eta^{-1}$. 

From \eqref{est:1111} and \eqref{est:2222}, which also implies that $\abs{I}\leq C \rho(s_0)^{-1}\leq C \ell^{-1}$, we thus obtain the claimed estimate of
\beqas 
\int_I \al(s) ds &\leq  (\al(s_0)+C\ell^2)\abs{I} +C  \norm{\tau_g(u)}_{L^2}\sum_{j\geq 0} (2^j \rho(s_0))^{\half} \cdot (2^j \rho(s_0))^{-1}\\
&\leq C\rho(s_0)^{-1}\al(s_0)+ C\ell+ 
C \rho(s_0)^{-\half}  \norm{\tau_g(u)}_{L^2(\Col(\ell))}.
\eeqas
\end{proof}
\section{Proof of Theorem \ref{thm:1}}
\label{sect:3}
We now turn to the proof of our main results in the hyperbolic setting. So let $u_i$ be a sequence of almost harmonic maps from hyperbolic cylinders 
$(\Col(\ell_i),g_i=g_{\ell_i})$,  $\ell_i\to 0$, with 
\beq 
\label{def:eps-1}
\eps_i:= \norm{\tau_{g_i}(u_i)}_{L^2(\Col(\ell_i),g_{i})}\ell_i^{-\half}\to 0
\eeq 
which converges to a full bubble branch as recalled in the introduction, see  \cite{HRT} for more detail.
The choice of the numbers  $\{s_i^m\}_{m=0}^{\bar m}$ 
 in \cite{HRT} ensures that 
there exists a number $\La_0$ (allowed to depend on the specific sequence $u_i)$ for which 
$\A(u_i)\subset \bigcup_m [s_i^m-\La_0,s_i^m+\La_0] $
for all $i$. As the bubble branches against which the shifted maps 
$u_i(\cdot-s_i^m)$, $1\leq m\leq \bar m-1$ converge are non-trivial and as we always have $\pm X(\ell_i)\in \A(u_i)$ we furthermore know that 
$ \A(u_i)\cap  [s_i^m-\La_0,s_i^m+\La_0]\neq \emptyset $ for every $m=0,\ldots,\bar m$ and sufficiently large $i$. 

In the following we can thus use that 
\beq
\label{est:equiv-dist}
\dist(s,\A(u_i))-\La_0\leq \min_m \abs{s-s_i^m}\leq \dist(s,\A(u_i))+\La_0
\eeq
for every $s\in [-X(\ell_i),X(\ell_i)]$, and hence in particular that factors like $e^{-\dist(s,\A(u))}$ are bounded by  $C e^{-\min_m \abs{s-s_i^m}}$. Here and in the following $C$ denotes a constant that is allowed to depend on the specific sequence of maps $u_i$, but is of course independent of $i$. 

We first need to decide which parts of $[s_i^{m-1}+\La_0, s_i^m-\La_0]\times S^1$
 we want to include in the extended bubble regions around $s_i^{m-1}$ respectively $s_i^m$ and which (central) part of this cylinder we want to view instead as the connecting cylinder $[b_i^{m-1},a_i^m]\times S^1$ between the 
extended bubble regions.

To begin with we look at the auxiliary sets 
\beq \label{def:Iim}
I_i^m:=\{s\in[s_i^{m-1},s_i^m]: \dist(s,\A(u_i))\geq 4 \abs{\log(\ell_i)}+1\},
\eeq
where here and in the following we can fix
$m\in \{1,\ldots, \bar m\}$ and only need to consider sufficiently large indices $i$. We can hence in particular assume that   $4\abs{\log(\ell_i)}+1\geq \La_0+2$ which ensures that these sets are closed (possibly empty) intervals. 

If $I_i^m$ is not empty we furthermore let  $t_{i}^m$ be the 
element of $I_i^m$ for which the conformal factor is minimal, i.e. set $t_i^m=0$ if $0\in I_i^{m}$ while $t_i^m=\pm \min_{I_i^m}\abs{s}$ if $I_i^m\subset \R^\pm $. 

If, after passing to a subsequence, either  $I_i^m=\emptyset$ for all $i$ or  $I_i^m\neq \emptyset$ but 
\beq
\label{ass:to-zero}
\rho_i(t_i^m)^{-1}\al_i(t_i^m)\to 0 \text{ as } i\to \infty\eeq
for $\al_i(s):= \fint_{\{s\}\times S^1}\abs{\partial_s u_i}$ 
then we set 
$
b_i^{m-1}=a_i^{m}= \half (s_i^{m-1}+s_i^m)$. In this case we hence end up with a trivial set as connecting cylinder and will prove that no neck forms between the bubble branches against which $u_i(\cdot -s_i^{m-1})$ respectively $u_i(\cdot -s_i^m)$ converge. 

Conversely if, after passing to a subsequence,
\beq
\label{ass:not-to-zero}
\rho_i(t_i^m)^{-1}\al_i(t_i^m)\geq c_0 \text{ for all } i \text{ and  some } c_0>0, 
\eeq 
then we choose 
$b_i^{m-1}$ and $ a_i^{m}$ as the minimal and maximal element of $I_i^m$ for which 
\beq\label{def:ai-bi} \rho_i(s)^{-1}\al_i(s)\geq \de_i:=\max(\eps_i,\ell_i)^{\half}.\eeq
As $\de_i\leq c_0$ (for sufficiently large $i$) these numbers are well defined and satisfy $b_i^{m-1}\leq t_i^m\leq a_i^{m}$.

We note that in both cases \eqref{eq:ai-bi} is satisfied since $s_i^m-s_i^{m-1}\to \infty$ and since 
\eqref{est:equiv-dist} ensures that $\dist(I_i^m,\{s_i^{m-1},s_i^{m}\})\geq 4 \abs{\log(\ell_i)}-\La_0\to \infty$ if $I_i^m\neq \emptyset$. 

As a first step towards the proof of our main theorem we now want to establish that this choice of $a_i^m$ and $b_i^m$ ensures
 that 
\eqref{claim:no-energy-loss-infty} and \eqref{claim:no-neck-infty} hold true, and hence that there is no loss of energy nor formations of necks on the extended bubble regions 
$B_{i}^m =[a_i^m,b_i^m]\times S^1$. 

By symmetry it suffices to analyse the behaviour of $u_i$ for $s\in [a_i^m, s_i^m-\La]$.
To this end we first consider the set 
$[a_i^m, s_i^m-\La]\setminus I_i^m$ which is an interval of length 
no more than $O(\abs{\log\ell_i})$ whose distance from $\A(u)$ is at least $\La-\La_0$. Using additionally that the conformal factors $\rho_i$ are uniformly bounded we hence obtain from Lemma \ref{lemma:transition} and our main assumption \eqref{def:eps-1} that  
\beqs
E(u_i, ([a_i^m, s_i^m-\La]\setminus I_i^m) \times S^1)
\leq C e^{-\La}+C\ell_i\eps_i^2+ C\abs{\log\ell_i}(\ell_i+\eps_i\ell_i^{\half})
\eeqs
and 
\beqs
\osc_{([a_i^m, s_i^m-\La]\setminus I_i^m) \times S^1} u_i \leq Ce^{-\La/2}+C\ell_i^\half\eps_i+C\abs{\log\ell_i}(\ell_i^\half+\eps_i^\half\ell_i^{\frac14})
\eeqs
both tend to zero 
as $i\to \infty$ and $\La\to \infty$.

It thus remains to bound the energy and oscillation of $u_i$ on $(I_i^m\cap\{s\geq a_i^m\})\times S^1$ and to this end we want to prove that
\beq\label{est:int-al-Iim}  
\int_{I_i^m\cap\{s\geq a_i^m\}}\al_i(s) ds\to 0 \text{ as } i\to \infty.
\eeq
This is trivially true if $I_i^m=\emptyset$ and is also true if \eqref{ass:to-zero} holds 
as in this situation Lemma \ref{lemma:osc-al-log} 
implies that 
\beqs
\int_{I_i^m}\al_i(s) ds\leq C\rho(t_i^m)^{-1}\al_i(t_i^m)+C\ell_i+C\rho(t_i^m)^{-\half}\ell_i^\half \eps_i\to 0 \text{ as } i\to \infty.
\eeqs

In the remaining case where \eqref{ass:not-to-zero} holds we can instead use that $a_i^m$ is chosen so that 
$\rho_i^{-1}(s)\al_i(s)\leq \de_i\to 0$ for all $s\in I_i^m$ with $s\geq a_i^m$. We can hence apply the analogue argument (with $t_i^m$ replaced by the element $\tilde t_i^m$ of $I_i^m\cap\{s\geq a_i^m\}$ which minimises $\rho$) 
to conclude that also in this case \eqref{est:int-al-Iim} holds.

Having established \eqref{est:int-al-Iim}, and hence that the oscillation of 
$ \fint u_i(\cdot,\theta)d\theta$ over $I_i^m\cap\{s\geq a_i^m\}$ tends to zero, we immediately deduce that 
$$\osc_{(I_i^m\cap \{s\geq a_i^m\})\times S^1} u_i\to 0$$
as the standard angular energy estimates recalled in 
Lemma \ref{lemma:ang-energy-standard} ensure that 
 the oscillation of $u_i$ over circles $\{s\} \times S^1$, $s\in I_i^m$, is bounded by 
$C\vartheta_i(s)^\half \leq C\ell_i +C \ell_i^\half\eps_i\to 0$. 

Combining 
\eqref{est:int-al-Iim}  with  Lemmas \ref{lemma:ang-energy-standard} and \ref{lemma:H2-angular-better} furthermore allows us to deduce that   
\beqas
E((I_i^m\cap\{s\geq a_i^m\})\times S^1) &\leq C\int_{I_i^m\cap\{s\geq a_i^m\}} \al_i^2+ \vartheta_i +\int_{S^1}
\abs{u_{s\th}}^2d\th \, ds\\
&\leq C \int_{I_i^m\cap\{s\geq a_i^m\}} \al_i ds +C\ell_i^2+ C\ell_i\eps_i^2\to 0,
\eeqas 
since Lemma \ref{lemma:H2-standard} and the trace theorem ensure that the $\al_i$ are uniformly bounded away from $\A(u_i)$. 

This completes the analysis of the maps $u_i$ on the extended bubble regions and it remains to study the behaviour of the curves $\hat u_i$, and their reparametrisations $v_i^m$, on the intervals $(b_i^{m-1},a_i^m)$. 

We recall that these intervals are  non-empty only if  \eqref{ass:not-to-zero} holds and note that by symmetry 
 we can assume without loss of generality that $\abs{a_i^{m}}\geq \abs{ b_i^{m-1}}$ and hence that 
$\rho\leq \rho(a_i^{m})$ on $[b_i^{m-1},a_i^m]$. 

As the choice of $a_i^m$ ensures that $\al_i(a_i^m)\geq \rho_i(a_i^m) \de_i$ and as $\ell_i\leq 2\pi\rho_i$ we can first apply the estimate 
\eqref{est:al-difference} from Lemma \ref{lemma:osc-al-log} to see that for every $s\in [b_i^{m-1},a_i^{m}]$
\beqa\label{est:lower-al-in-a-proof}
\al_i(s)&\geq \al_i(a_{i}^{m})-C\ell_i^2-C
\rho_i(a_{i}^{m})^{\half} \norm{\tau_{g_i}(u_i)}_{L^2}\\
&\geq \rho_i(a_{i}^{m})\big[ \de_i -C \ell_i-C \eps_i\big]=(1-o(1))\rho_i(a_{i}^{m}) \de_i\geq (1-o(1))\rho_i(s)\de_i
\eeqa
where we use in the penultimate step that $\de_i=(\max(\eps_i,\ell_i))^\half \gg \max(\eps_i,\ell_i)$.

As $R_{u_i}(s)+\vartheta_i(s)\leq C R_{u_i}(s)\leq C\rho_i^2(s) \ell_i\eps_i^2+C\ell_i^4$, compare \eqref{est:F-by-F-hyp} and Lemma \ref{lemma:ang-energy-standard}, we can use Corollary \ref{cor:speed} to see that
\beqa \label{est:al-by-velocity}
\abs{\hat u_i'}&\geq (1-\vartheta_i^\half)\al_i-C\ell_i^2-C\rho_i\ell_i^\half \eps_i\geq 
(1-o(1)-C\ell_i\de_i^{-1}-C\ell_i^\half \de_i^{-1}\eps_i)\al_i\geq (1-o(1)) \al_i
\eeqa
on $ [b_i^{m-1},a_i^m]$. 
In particular, for all sufficiently large $i$, we have
\beq 
\label{claim:lower-bound-velocity}
\abs{\hat u_i'(s)}\geq \half \al_i\geq \half \de_i \rho_i(s) \text{ for  } s\in [b_i^{m-1},a_i^{m}].
\eeq
Next we prove that the (euclidean) tension of the curve $\hat u_i$ is bounded by 
 \beqa \label{est:tension-ui}
\abs{\tauE(\hatumi)} 
  &\leq C\int_{S^1} \abs{\tauE(u_i)} d \th+
  C\vartheta_i^\half \bigg[ 
\abs{\hatumi '}^2+ \abs{\hatumi '} \int_{S^1} \abs{\partial_{s\th} u_i}d\th+\vartheta_i  
\bigg].
\eeqa
To see this we first use that $\hat u_i'=d\pi_N(\bar u_i)(\bar u'_i)$ for $\bar u_i(s)=\fint_{\{s\}\times S^1} u_i $
and that  $-d^2\pi(p)\vert_{T_pN\times T_pN}=A(p)$, $p\in N$, to write 
 \beqa
 \label{est:hallo}
\tauE(\hatumi)&=\hatui''+A(\hatui)(\hatui',\hatui')
=d\pi_N(\bar u_i)(\bar u_i'')+d^2\pi_N(\bar u_i)(\bar u_i',\bar u_i')-d^2\pi_N(\hatui)(\hatui',\hatui').
\eeqa
As
$\abs{\bar u_i-\hat u_i}\leq \osc_{S^1} u_i\leq C\vartheta_i^\half$ and as $\abs{\bar u_i'-\hat u_i'}\leq  C \vartheta_i^\half \al_i\leq 
 C \vartheta_i^\half \abs{\hat u_i'}$, compare \eqref{est:baru-hatu-der} and \eqref{claim:lower-bound-velocity},
we can bound 
$$\abs{d^2\pi_N(\bar u_i)(\bar u_i',\bar u_i')-d^2\pi_N(\hatui)(\hatui',\hatui')}\leq C\vartheta_i^\half \abs{\hat u_i'}^2,$$
so it remains to estimate the first term in \eqref{est:hallo}. 

As $\bar u_i''(s)=\fint_{S^1}\partial_{ss} u_i=\fint_{S^1}\Delta_{g_E} u_i=\fint_{S^1}\tauE(u_i)+A(u_i)(\na u_i,\na u_i)$ 
and as $d\pi_N(u_i)(A(u_i)(\na u_i,\na u_i))=0$  we have
\beqas
\abs{d\pi_N(\bar u_i)(\bar u_i'')}&\leq C\int_{S^1} \abs{\tauE(u_i)}+\norm{u_i-\bar u_i}_{L^\infty(S^1)} \int_{S^1}\abs{\na u_i}^2\leq C\int_{ S^1} \abs{\tauE(u_i)}+C\vartheta_i^\half \int_{ S^1}\abs{\na u_i}^2.
\eeqas
This yields the claimed estimate \eqref{est:tension-ui} as  $\al_i=\fint\abs{\partial_s u_i}\leq 2\abs{\hat u_i'}$ for all sufficiently large $i$ and as we can hence  bound
\beqas
\int_{S^1}\abs{\na u}^2 &\leq \vartheta_i+ C\al_i \norm{\partial_s u_i}_{L^\infty(S^1)} \leq\vartheta_i+ C \al_i( \al_i+\int_{S^1} \abs{\partial_{s\th} u_i})\leq \vartheta_i+ C \abs{\hat u_i'} ( \abs{\hat u_i'}+\int_{S^1} \abs{\partial_{s\th} u_i}).\eeqas
We now reparametrise $\hatumi\vert_{[b_i^{m-1},a_i^m]}$ by arclength and estimate the tension of the resulting curve $v_i^m=\hatumi\circ s_i^m$, which is well defined as \eqref{claim:lower-bound-velocity} ensures that $\hat u_i'\neq 0$ on this interval. To lighten the notation we write for short $v_i=v_i^m$ and $s_i=s_i^m$. 

So let $c_i^m=\half\int_{b_i^{m-1}}^{a_i^{m}}\abs{\hatumi '} ds$ and let 
$s_i:[-c_i^m,c_i^m]\to [b_i^{m-1}, a_i^{m}]
$ be the increasing bijection with $\abs{\dot s_i}^2 \cdot \abs{\hatumi '\circ s_i}^2=1$.
Differentiating this relation and using that 
 $\hatui ''\cdot \hatui'=\tauE(\hatui) \hatui'$ 
yields 
$$\abs{\ddot{s_i}}\leq \abs{\dot{s_i}}^{2}\abs{\hatumi '\circ s_i}^{-1} \abs{\tauE(\hatumi)\circ s_i }=
\abs{\hatumi '\circ s_i}^{-3} \abs{\tauE(\hatumi)\circ s_i }.$$ 
As $\tauE(v_i)= \abs{\dot s_i}^2 \tauE(\hatumi)\circ s_i + \ddot{s_i} \hatumi '\circ s_i $ we can thus use 
\eqref{est:tension-ui}  to bound 
\beqas 
\abs{\tauE(v_i)\circ s_i^{-1}}&
\leq 2\abs{\hatumi '}^{-2} \abs{\tauE(\hatumi)}\\
&\leq C \abs{\hatumi '}^{-2}
\int_{S^1} \abs{\tauE(u_i)} d \th  +C \vartheta_i^\half \bigg(1+\abs{\hatumi '}^{-2} \vartheta_i+\abs{\hatumi '}^{-1} \int_{S^1}\abs{\partial_{s\th} u_i}d\th\bigg).
\eeqas
For any $p\in [1,2]$ we can hence estimate
\beqas 
\norm{\tauE(v_i)}_{L^p([-c_i^m,c_i^m])}^p&=
\int \abs{\hatumi '}\abs{\tauE(v_i)\circ s_i^{-1}}^p ds\\
&\leq C\int 
\abs{\hatumi '}^{-2p+1}
\abs{\tauE(u_i)}^p +\vartheta_i^{\frac{p}2}\abs{\hatumi '}  +
\abs{\hatumi '}^{-2p+1} \vartheta_i^\frac{3p}2+\abs{\hatumi '}^{-p+1} \abs{\partial_{s\th} u_i}^p ds d\theta
\eeqas
where we integrate over $s\in [b_i^{m-1},a_i^{m}]$ and $\theta\in S^1$. Combined with the lower bound \eqref{claim:lower-bound-velocity} on the velocity of $\hat u_i$ and the fact that $\rho_i\geq c\ell_i$ we hence get 
\beqas 
\norm{\tauE(v_i)}_{L^p([-c_i^m,c_i^m])}^p&\leq 
C
\de_i^{-2p+1}
\ell_i^{1-p} \int  (\rho_i^{-1}\abs{\tauE(u_i)})^p ds d\th
+C E(\hat u_i)^\half (\int \vartheta_i^p ds)^\half \\
&+C\de_i^{-2p+1} \int \rho_i^{-(2p-1)} \vartheta_i^{\frac{3p}2}  ds 
+C\de_i^{-(p-1)} \int \rho_i^{-(p-1)} \abs{u_{s\theta}}^p ds d\theta\\
& =:C(T_i^{(1)}+T_i^{(2)}+T_i^{(3)}+T_i^{(4)})
\eeqas
and we will show that each of these terms tends to zero as $i\to \infty$. 

First of all, the relation \eqref{est:relation-tension} between the euclidean and the hyperbolic tension
and our main assumption \eqref{def:eps-1} imply that 
\beqs
T_i^{(1)}\leq \de_i^{-2p+1}
\ell_i^{1-p} (2X(\ell_i))^{1-\frac{p}2}\norm{\tau_{g_i}(u_i)}_{L^2(\Col(\ell_i))}^p\leq C\de_i^{-2p+1} \eps_i^p\leq C\de_i\to 0.
\eeqs
Since the endpoints of our interval $[b_i^{m-1},a_i^m]$ have distance at least $4\abs{\log\ell_i)}-\La_0$ from the high energy set   we can furthermore use that Lemmas \ref{lemma:ang-energy-standard} and \ref{lemma:H2-angular-better} as well as 
\eqref{est:F-by-F-hyp} to bound
\beq
\label{est:summer}
\vartheta_i(s)+\int_{\Cyl_1(s)} \abs{\partial_{s\theta} u_{i}}^2 \leq C \rho_i^2 \ell_i \eps_i^2 
+C\ell_i^2 e^{-\min(s-b_i^{m-1},a_i^{m}-s)} \text{ for all } s\in [b_i^{m-1},a_i^m].\eeq
As $E(\hat u_i)\leq C E(u_i)\leq C$ this immediately implies that $T_i^{(2)}\to 0$ while also
\beqas
T_i^{(3)}&\leq C\de_i^{-2p+1} \ell_i^{\frac{3p}{2}} \eps_i^{3p} \int\rho_i^{p+1}+C \de_i^{-2p+1} \ell_i^{p+1}\to 0.
\eeqas  
Similarly \eqref{est:summer} implies that
\beqs 
T_i^{(4)}\leq C\de_i^{-(p-1)}\eps_i^p \ell_i^{\frac{p}2}
\int \rho_i+C  \de_i^{-(p-1)} \ell_i\leq C  \ell_i^{\frac{p}2}\abs{\log\ell_i} +C\de_i^{-1}\ell_i\to 0
\eeqs
where we use in the penultimate step that $\int_{-X(\ell)}^{X(\ell)}\rho_\ell ds\leq C \abs{\log\ell}$.

We hence conclude that 
$\norm{\tau(v_i)}_{L^p([-c_i^m,c_i^m])}\to 0 \text{ for every } p\in [1,2]$ as claimed in Theorem \ref{thm:1}. 

After passing to a subsequence we can now assume that $c_i^m\to c\in [0,\infty]$.
If $c=0$ then the curves simply collapse to a point so there is nothing to show. 

If instead $c\in (0,\infty)$ then setting  $\tilde v_i=v_i(\frac {c_i^m}{c} \cdot)$ 
we obtain curves in $N$ that are parametrised over a fixed interval $I= [- c, c]$ with constant velocity $\la_i=\frac {c_i^m}{c}\to 1$ which satisfy
\beq
\label{est:v''}
\tilde v_i''=-A(\tilde v_i)(\tilde v_i',\tilde v_i')+g_i
\eeq for functions $g_i=\tau(\tilde v_i)$ with $\norm{g_i}_{L^2(I)}=\la_i \norm{\tau(v_i)}_{L^2([-c_i^m, c_i^m])}\to 0$. This implies in particular that the sequence $\tilde v_i$ is bounded in $W^{2,2}(I)$ and hence that it subconverges to a limit $v_\infty$ weakly in $W^{2,2}(I)$ and strongly in $C^{1,\al}(I)$ for $\al<\half$.
Hence the right hand side of \eqref{est:v''} converges strongly in $L^2$ to $- A(v_\infty)(\na v_\infty,\na v_\infty)$  which allows us to deduce  that $\tilde v_i$ converges indeed strongly in $W^{2,2}$ to the limit $v_\infty$ which satisfies
$\tauE(v_\infty)=v_\infty''+A(v_\infty)(v_\infty,v_\infty)=0$, i.e. is a geodesic.

Similarly, if $c_i^m\to \infty$ then we can apply the above argument locally on $[0,\infty)$ to conclude that the curves $v_i^m(\cdot-c_i^m)$ and $v_i^m(c_i^m-\cdot)$ subconverge strongly in $W^{2,2}_{loc}$ to infinite length geodesics. 

\section{Almost harmonic maps from degenerating tori} \label{sect:tori}
We recall that the moduli space of the torus is given by the quotients of $(\C,g_E)$ with respect to the lattices that are generated by $2\pi$ and $A+iB$ for $A\in(-\pi,\pi]$ and $B>0$ with $\abs{A+iB}\geq 2\pi$ (with strict inequality for $A<0$). 

We can thus view maps $u_i$ from degenerating unit area tori $(T^2,g_i)$ as maps from $\R\times S^1$ equipped with metric $g_i=\rho_i^2 g_E$ for $\rho_i=(2\pi B_i)^{-\half}$ with periodicity 
$u_i(s,\theta) =u_i( s+B_i,\theta+A_i)$. 

Given any closed curve $\al\in C^2(S^1,N)$ we can hence consider the maps $u_i(s,\theta)=\al(\frac{2\pi s}{B_i})$ whose tension decays according to 
\beq
\norm{\tau_{g_i}(u_i)}_{L^2(T^2,g_i)}^2\leq C \int_0^{B_i}\rho_i^{-2} B_i^{-4}=C B_i^{-2}=C \ell_i^4
\eeq
to see that we cannot expect almost harmonic maps to converge to \textit{geodesic} bubble trees if the tension decays no faster than $\inj(M,g_i)^2$. Here $\ell_i$ denotes the length of the shortest closed geodesic in the domain $(T^2,g_i)$ which is given by $\ell_i=2\inj(M,g_i)=2\pi \rho_i$ as $\abs{A_i+i B_i}\geq 2\pi$. 

The proof of our main result in the hyperbolic case now carries over to the new setting of maps from tori with only minor modifications and indeed simplifies since the conformal factors $\rho_i=\frac{\ell_i}{2\pi}$ are constant. 

To explain the necessary modifications we first note
that $\norm{\tau_{g_i}(u_i)}_{L^2(T^2,g_i)} =o(\ell_i^{2})$ is equivalent to asking that $\norm{\tau_{g_E}(u_i)}_{L^2([0,B_i]\times S^1,g_E)}=\rho_i \norm{\tau_{g_i}(u_i)}_{L^2([0,B_i]\times S^1,g_i)} =o(\ell_i^3)$. 

Defining the intervals $I_i^m$ now so that $\dist(I_i^m,\{s_i^{m-1},s_i^m\})= 8\abs{\log\ell_i}$ we then need to distinguish between the case where either
 $I_i^m=\emptyset$ or 
$ \ell_i^{-2}\sup_{I_i^m} \al_i\to 0$ and the case where $\sup_{I_i^m}\ell_i^{-2}\al_i\geq c_0>0$. In the former situation we again set 
$b_i^{m-1}=a_i^{m}= \half (s_i^{m-1}+s_i^m)$ so end up with a trivial connecting cylinder, while in the later case we choose $a_i^m$ and $b_i^{m-1}$ as the maximal and minimal elements of $I_i^m$ with $\ell_i^{-2}\al(s)\geq \de_i$ where we again set $\de_i=\min(\eps_i,\ell_i)^\half$, now for $\eps_i:= \ell_i^{-3}\norm{\tau_{g_E}(u_i)}_{L^2([0,B_i]\times S^1,g_E)}\to 0$. 
As $B_i\leq C\ell_i^{-2}$ we hence again get that
$\int_{I_i^m\setminus [b_i^{m-1},a_i]}\al_i\to 0$ and we can argue exactly as in the previous proof to obtain \eqref{claim:no-energy-loss-infty} and \eqref{claim:no-neck-infty}, i.e. to exclude the loss of energy or formation of necks on the extended bubble regions.  

Integrating \eqref{claim:est-osc-alpha} over the interval $[b_i^{m-1},a_i^m]$ whose length is bounded by $C\ell_i^{-2}$ 
and using that  $R_{u_i}\leq C\ell_i^8+C\eps_i^2 \ell_i^6$ on $I_i^m$ then allows us to conclude that $\al_i\geq (1-o(1)) \de_i\ell_i^2 $ on this interval and hence that also $\abs{\hatumi'}\geq  (1-o(1)) \de_i\ell_i^2 $ on the connecting cylinder. This allows us to carry out the rest of the proof exactly as in the hyperbolic case to complete the proof of Theorem \ref{thm:torus}.

M. Rupflin: Mathematical Institute, University of Oxford, Oxford OX2 6GG, UK\\
\textit{melanie.rupflin@maths.ox.ac.uk}


\begin{thebibliography}{99}
\parskip 1pt
\setlength{\itemsep}{1pt plus 0.3ex}

\bibitem{Chen-Tian-main} J. Chen and  G. Tian \emph{Compactification of moduli space of harmonic mappings}, Comment, Math. Helv. \textbf{74} (1999), 201-237

\bibitem{DLL} W. Ding, J. Li and Q. Liu, \emph{Evolution of minimal torus in Riemannian manifolds.\/} Invent. Math. \textbf{165} (2006) 225--242.
%
\bibitem{DT}  W.~Y.~Ding and G.~Tian, \emph{Energy identity for a class of approximate harmonic maps from surfaces.} Comm. Anal. Geom. {\bf 3} (1995) 543--554.
%

\bibitem{HRT} T. Huxol, M. Rupflin and P. Topping, \textit{Refined asymptotics of the Teichmüller harmonic map flow into general targets}, Calculus of Variations and PDE,  vol 55, Article  85 (2016) 


\bibitem{Li-Wang-main} Y. Li and Y. Wang \emph{ The refined analysis on the convergence behavior of harmonic maps from cylinders}, J. Geom. Anal. 22 (2012), 942-963



\bibitem{LinWang} F. Lin and C. Wang \emph{Energy identity of harmonic map flows from surfaces at finite singular time}, Calc. Var. PDE \textbf{6}, (1998), 369-380


\bibitem{QingTian} J. Qing and G. Tian, \emph{Bubbling of the heat flows for harmonic maps from surfaces} Comm. Pure Appl. Math. {\bf 50} (1997) 295-310.

\bibitem{Parker} T. Parker, \emph{Bubble treee convergence for harmonic maps}, J. Diff. Geom \textbf{44}, 595-633 (1996)


\bibitem{RTZ} M. Rupflin, P. M. Topping and M. Zhu,
\emph{Asymptotics of the Teichm\"uller harmonic map flow}, 
Advances in Math. {\bf 244} (2013) 874--893.


 \bibitem{RT} M. Rupflin and P. M. Topping, \textit{Flowing maps to minimal surfaces},  Amer. J. Math. \textbf{138} (2016),  1095--1115.

	\bibitem{Struwe1985} M. Struwe, \emph{On the evolution of harmonic mappings of Riemannian surfaces.\/} Comment. Math. Helv. {\bf 60} (1985) 558--581.

 \bibitem{Topping-quantisation}  P. M, Topping, \emph{Repulsion and quantization in almost-harmonic maps, and
              asymptotics of the harmonic map flow}, Ann. of Math. {\bf 159} (2004), 465--534.
              
\bibitem{Tromba}  A. Tromba, `Teichm\"uller theory in Riemannian geometry.' Lecture notes prepared by Jochen Denzler. Lectures in Mathematics ETH-Z\"urich. Birkh\"auser (1992).


\bibitem{Miaomiao} M. Zhu \emph{Harmonic maps from degenerating Riemann surfaces}, Math. Z. 264, 63-85 (2010)

\end{thebibliography}
\end{document}